\documentclass[a4paper]{amsart}
\usepackage[centering]{geometry}
\usepackage{lmodern}
\usepackage[T1]{fontenc}
\usepackage[utf8]{inputenc}
\usepackage{mathrsfs}
\usepackage{amssymb,amsthm}
\usepackage{tensor}
\usepackage{slashed}
\usepackage{wrapfig}
\usepackage[pdftex]{graphicx}
\DeclareGraphicsRule{*}{mps}{*}{}
\DeclareGraphicsRule{.tif}{png}{.png}{`convert #1 `dirname
#1`/`basename #1 .tif`.png}
\usepackage{esint}
\usepackage{epstopdf}
\usepackage{amscd}
\usepackage{enumerate}
\usepackage{fancyhdr} 
\usepackage{microtype}
\usepackage[all]{xy}
\xyoption{poly} 
\usepackage{hyperref}
\usepackage{easyReview}
\usepackage{tikz-cd}
\tikzcdset{arrow style=tikz, diagrams={>=stealth}}
\usepackage[capitalise]{cleveref}
\makeatletter
\newcommand{\crefnames}[3]{%
\@for\next:=#1\do{%
\expandafter\crefname\expandafter{\next}{#2}{#3}%
}%
}
\makeatother
\crefnames{part,chapter,section}{\textsection}{\textsection\textsection}
\crefname{lem}{Lemma}{Lemmas}
\Crefname{lem}{Lemma}{Lemmas}
\crefname{thm}{Theorem}{Theorems}
\Crefname{thm}{Theorem}{Theorems}
\crefname{prop}{Proposition}{Propositions}
\Crefname{prop}{Proposition}{Propositions}
\crefname{defn}{Definition}{Definitions}
\Crefname{defn}{Definition}{Definitions}
\theoremstyle{plain}%
\newtheorem{thm}{Theorem}[section]
\newtheorem{prop}[thm]{Proposition}
\newtheorem{lem}[thm]{Lemma}
\newtheorem{cor}[thm]{Corollary}

\theoremstyle{definition}
\newtheorem{defn}{Definition}[section]

\theoremstyle{remark}

\newtheorem*{rem}{Remark}

\numberwithin{equation}{section}

\DeclareMathOperator{\grad}{grad}

\DeclareSymbolFont{AMSb}{U}{msb}{m}{n}
\DeclareMathSymbol{\N}{\mathbin}{AMSb}{"4E}
\DeclareMathSymbol{\Z}{\mathbin}{AMSb}{"5A}
\DeclareMathSymbol{\R}{\mathbin}{AMSb}{"52}
\DeclareMathSymbol{\Q}{\mathbin}{AMSb}{"51}
\DeclareMathSymbol{\I}{\mathbin}{AMSb}{"49}
\DeclareMathSymbol{\F}{\mathbin}{AMSb}{"46}

\newcommand{\set}[1]{\left\{#1\right\}}

\newcommand{\brac}[1]{\ensuremath{\left( {#1} \right)}}
\newcommand{\sbrac}[1]{\ensuremath{\left[ {#1} \right]}}%

\newcommand{\ddif}{\blacktriangle} 
\newcommand{\decop}{\underline{d}}
\newcommand{\incop}{\underline{s}}
\newcommand{\cycop}{\underline{t}}

\newcommand{\degenmaps}{\pmb\sigma}
\newcommand{\facemaps}{\pmb\delta}
\newcommand{\cyc}{\pmb\tau}
\newcommand{\funsp}{\mathcal{C}_{\mathrm{Spec}}}

\usepackage{mathtools}
\newcommand{\defeq}{\vcentcolon=}
\newcommand{\hotimes}{\mathbin{\hat\otimes}}
\author{Yang Liu}
\address{
SISSA,
via Bonomea 265,
34136 Trieste,
Italy
}

\email{yliu@sissa.it}
\title {
    Cyclic Structure behind Modular Gaussian Curvature
}

\keywords{
    Modular Curvature, Variational Calculus, Cyclic Category  
}

\subjclass[2010]{47A60, 46L87,  58Exx}

\date{\today} 

\begin{document}

\begin{abstract}
We propose a systematic scheme for computing the  variation of rearrangement
operators arising in the recently developed spectral geometry on noncommutative
tori and $\theta$-deformed Riemannian manifolds. It can be summarized as
a category whose objects consists of spectral functions of the rearrangement
operators and morphisms are generated by transformations associated to basic
operations of the variational calculus. The generators of the morphisms fulfil
most of the relations in Connes's cyclic category, but also include all the
partial derivatives. Comparison with Hopf cyclic theory has also been made.
\end{abstract}

\maketitle

\tableofcontents 

\section{Introduction}
\label{sec:intro}

This paper is devoted to the variational aspects of spectral geometry on
noncommutative tori initiated in \cite{Connes:2011tk} and later
\cite{MR3148618,MR3194491}.
The new features that we would like to investigate concerns purely the
noncommutativity  between the metric coordinates and their derivatives.
The very first example in this direction is a simple functional relation
derived from the variational nature of the modular Gaussian curvature
introduced in \cite{MR3194491}.
As demonstrated  in the $a_4$-term calculation \cite{2016arXiv161109815C},
the complexity of such relations from variation grow dramatically when
the order of differentiations involved increases
\footnote{Modular Gaussian
curvature is determined by the $a_2$-term, here $2$ and $4$ agree with the
maximal order of derivatives appeared in the calculation.}.
One of our primary motivations  is to search for new cancellation and
simplification for the lengthy formulas obtained  \cite{2016arXiv161109815C},
especially those in the appendix.
Such goal  often relies on adding new mathematical structures, 
the paper contributes  some partial progress  in relation to the cyclic
(co)homology , discovered independently by Connes and Tsygan.

For the $a_2$-term, the cancellation can be seen from the improvement between
\cite{LIU2017138} and \cite{Liu:2018ab}. In the former paper   \cite{LIU2017138},
in which the higher dimensional analogue of the Gaussian curvature was first
introduced on toric noncommutative manifolds
\footnote{a.k.a $\theta$-deformations \cite{Rieffel:1993wb}
or Connes-Landi deformations, cf.  \cite{Connes:2001wv}, \cite{Connes:2002wh},
\cite{Brain:2013wp}.},
the functional relations take several equations
\cite[Eq. (4.5) - (4.8)]{LIU2017138}, which were
achieved on noncommutative four tori earlier   in
\cite{MR3369894}\footnote{at the end of the proof of Thm 5.1.}.
After dwelling on the computation for a while, the author realized that
those less organized relations indeed admits a much simpler form
\cite[Thm. 2.15]{Liu:2018ab} akin to  \cite[Eq. (4.42)]{MR3194491} 
\footnote{details are also provided in \cref{sec:CM-relt}.}.
Later, the author had a short visit at IHES and shared the progress with Connes.
He pointed out immediately that the relations in \cite[Prop 2.11]{Liu:2018ab},
which explain the aforementioned simplification, 
resemble exactly the compatibility condition
(cf. \eqref{eq:simp+cyc-rels-V-2-1}) in the cyclic category $\Delta C$.
The main results, \cref{thm:cyc-cato-sim-I} and \cref{cor:op-cyc-cate-relts},
constitute a more comprehensive interpretation of his vision.

The paper is organized as follows. In \cref{sec:rearr-op-fun}, we recall
the construction of rearrangement operators and then proceed with the
variational calculus \cref{sec:vcal-reops}.
In the literature, rearrangement operators are defined in terms of the
derivation commutator $\mathbf x = [ \cdot , h]$ associate to a log-conformal 
factor $h$ due to its own geometric significance \cite[\S~1.5]{MR3194491}.
In comparison to \cite[\S~2]{Liu:2018ab}, we now choose to work with the
multiplication operators associated to $h$ that makes the simplicial structure
in the variational calculus more transparent.
The difference occurs  only at the level of notations, that is, one can
pass from one to the other by the change of variable
given in  \eqref{eq:x-to-h}.
In \cref{sec:main}, we encapsulate the technical discussions in the previous
two sections into a category $ \mathcal C$, which possesses most of the axioms
as a cyclic module. The core is the generating set
of morphisms and relations stated in the two theorems. 
To further conceptualize the computations in \cref{sec:vcal-reops},
we make some comparison with Hopf cyclic theory in \cref{sec:cmp-Hcyc}.
In that regard, the failure of $ \mathcal C$ being a cyclic module can be explained as
the incompatibility between the underlying algebra and coalgebra structures.
Lastly, we go back to spectral geometry for applications in \cref{sec:appl-modgeo}.
We first reproduce, in \cref{sec:CM-relt}, Connes and Moscovici's functional
relation obtained in \cite{MR3194491} to illustrate how morphisms of $\mathcal C$
govern the variational calculus. 
In particular, it prodivdes more clear interpretation for the
three terms in  \eqref{eq:CMtp-II}  in cyclic theory. 
Moreover, the method also has the merit of being  suitale for computer algebra
system implementation, 
which seems to be indispensable for simplifying  the $a_4$-term
calculation \cite{2016arXiv161109815C}.
The second application concerns the action of $ \mathcal C$ on functionals 
arising from rearrangement lemma,
which could be useful to the verification of the functional relations,  a more
formidable task (but equally important) than deriving the relations. 
In \cref{sec:fn-psdo}, we only consider much simpler family of functions 
$\set{\omega_\alpha}$ (but intricately connected to those hypergeometric integrals
introduced in \cite{Liu:2018aa,Liu2020General-Rearran}),
and show that the action of $ \mathcal C $ can be used to derived some
differential relations. 

The notion of rearrangement operators is closely related to multiple operator
integral (see the end of \cref{sec:rearr-op-fun} for references), which now plays
a crucial role in study of spectral actions \cite{Nuland:2021tw}. 
We hope to find new applications of $ \mathcal C$ in that direction in furture
publications.

\section{Rearrangement Operators via Schwartz Functional Calculus}
\label{sec:rearr-op-fun}

Our discussion in \cref{sec:rearr-op-fun,sec:vcal-reops} is greatly influenced  by 
the work of Lesch  \cite{leschdivideddifference}, 
we shall follow the notations there,  with slightly changes,
to construction the rearrangement operators. 
Let $ \mathscr A $ be a unital  $C^*$-algebra. For any $h \in \mathscr A$,
denote by $h^{(j, n)} \in  L(\mathscr A^{\otimes n} , \mathscr A) $
the linear operator from $ \mathscr A^{\otimes  n} \to \mathscr A$:
\begin{align}
\label{eq:h-j-n-mlt-jslot}
h^{(j , n)} : \mathscr A^{\otimes  n} \to \mathscr A:
\rho_1 \otimes  \cdots \otimes \rho_n \mapsto
\rho_1 \cdots  \rho_{ j-1} h \rho_j \cdots \rho_n ,
\end{align}
that is the multiplication at the $j$-th slot, $j=0, \ldots, n$.
For $n=1$,  $ h^{(0 , 1)} , h^{(1 , 1)} : \mathscr A \to \mathscr A$ are
simply the left and the right multiplications.

Now assume $h = h^* \in  \mathscr A$ is self-adjoint and $U \subset \R$
is a open neighborhood that contains the spectrum of $h$.
For any $f \in C^\infty_c(U^{n+1})$, let $ \widehat{ f}$  be the normalized
Fourier transform such that
\begin{align}
\label{eq:hat-f-nrm}
f(x_0, \ldots , x_n) =
\int_{ \R^{n+1} }  \widehat{ f} (\xi)
e^{i x \cdot \xi} d\xi,
\end{align}
where $x= (x_0, \ldots , x_n)$ and $\xi = (\xi_0 , \ldots , \xi_n)$.

\begin{defn} [Schwartz functional calculus]
Let us fix a self-adjoint $h \in  \mathscr A$.
There exists an linear map
\begin{align*}
\mathscr S_h^{(n)}: C^\infty_c(U^{n+1}) \to
L( \mathscr A^{\otimes  n+1}, \mathscr A),
\end{align*}
given by
\begin{align}
\label{eq:scr-S-f-defn}
\mathscr S_h^{(n)} (f) = f (h^{(0)}, \ldots , h^{(n)}) \defeq
\int_{ \R^{n+1} }  \widehat{ f} (\xi)
e^{i \xi_0 h^{(0)}} \cdots e^{i \xi_n h^{(n)}}
d\xi,
\end{align}
where the right hand side,
as an operator from $ \mathscr A^{\otimes  n} \to \mathscr A$,
means (cf. \eqref{eq:h-j-n-mlt-jslot}):
\begin{align}
\label{eq:scr-S-f-of-rho}
\mathscr S_h^{(n)} (f)  ( \rho) =
f (h^{(0)}, \ldots , h^{(n)}) ( \rho)
=
\int_{ \R^{n+1} }  \widehat{ f} (\xi)
e^{i \xi_0 h} \rho_1 e^{i \xi_1 h} \cdots
\rho_n e^{i \xi_n h }
d\xi ,
\end{align}
where
$ \rho = \rho_1 \otimes  \cdots  \otimes  \rho_n  \in  \mathscr A^{\otimes  n}$.
\end{defn}
Let us make some remarks on nations.
In later exploration, the short form $  \mathscr S_h^{(n)} (f)$
often take place when the arguments $ \brac{ h^{(0)}, \ldots , h^{(n)} }$ are 
fixed. Meanwhile, in most of the calculations, we have to manipulate 
$ \brac{ h^{(0)}, \ldots , h^{(n)} }$, for instance,
performing cyclic permutation on them, 
the full form $ f \brac{ h^{(0)}, \ldots , h^{(n)} }$ is inevitable
in order to keep track of the change of variables.
We will freely drop the superscript  of $  \mathscr S_h^{(n)}$ whenever the domain
$ \mathscr A^{\otimes  n}$ is clear from context, or we can interpret
$ \mathscr S_h = \bigoplus_{ n=1}^\infty \mathscr S_h^{(n)}$ as
the direct sum.

Denote by $\funsp(n) \defeq \funsp(n,h) $ the collection of all $(n+1)$-variable
functions $f ( x_0 , \ldots , x_n)$
such that the Schwartz functional calculus
$ \mathscr S_h^{(n)} (f) \in  L( \mathscr A^{\otimes  n+1}, \mathscr A) $
is well-defined. To be more specific, we can consider a subalgebra
$ C_c^\infty(U_h^{n+1}) \subset \funsp(n,h)$ of smooth function with compact
support in  the open set
$ U_h^{n+1} = U_h \times \cdots \times  U_h \subset \R^{n+1}$,
and $U_h \subset \R$ is some open subset containing the spectrum of
$h  \in \mathscr A$.
Such smooth spectral functions are sufficient for many applications, especially
for the heat coefficients computation on  noncommutative tori.
Thanks to the nuclear property of the smooth Fr\'echet topology,
there exists a unique completion  $ \hotimes $ of the algebraic tensor $\otimes $
so that $ C_c^\infty(U_h \times \cdots \times U_h )
\cong C_c^\infty(U_h)^{ \hotimes (n+1) }$.
In more elementary terms, the algebraic tensors in
$ C_c^\infty(U_h)^{\otimes (n+1)}$, which is a dense subset,
correspond to function of separating variables on $U_h^{ n+1 }$:
\begin{align}
\label{eq:ele-tns-2-fun-sp-var}
f_0 \otimes \cdots \otimes  f_n \mapsto
\brac{ (x_0 , \ldots , x_n ) \mapsto
f_0 (x_0) \cdots f_n (x_n) \in  C_c^\infty(U^{n + 1}) } .
\end{align}
We will come back to this class of functions in \cref{sec:cmp-Hcyc}.

In the context of conformal geometry on noncommutative manifolds mentioned at
the introduction, the self-adjoint $h$ represents a log-conformal factor while
the  $\rho_i$'s are the derivatives of $h$. 
The notation   $f (h^{(0)}, \ldots , h^{(n)}) ( \rho)$
in \eqref{eq:scr-S-f-of-rho} implements a rearrangement that moves all the
zero-order derivatives to the left.
Such rearrangement process can be indeed constructed for a much larger class of
functions than the Schwartz ones.  The theory of multiple operator integrals 
\cite{Potapov:2013uv,Skripka:2019up,Dykema:2009wm,Pagter:2002vy}
explores the generality of the function $f$ and the operator $h$ for which the
integral in \eqref{eq:scr-S-f-of-rho} is well-defined. 
Connection with cyclic theory has also been made \cite{Nuland:2021tw} 
in the study of spectral actions.

\section{Variational Calculus  for  Rearrangement Operators}
\label{sec:vcal-reops}

This section is a continuation of the systematic approach for the standard
differential calculus with the appearance of rearrangement operators initiated in
\cite{Liu:2018ab}, where only spectral functions of one and two variable are
considered. 
We start with recalling basis notations on divided difference, whose crucial role 
in the our context of variational calculus was first pointed out in
\cite{leschdivideddifference}. 
\subsection{Divided Differences}
\label{sec:ddiff} 
For a one-variable function $f(z)$, the $n$-divided difference
$ \ddif^n (f)  $ has $(n+1)$-arguments which is
inductively  defined  as follows:
\begin{align*}
\ddif^0 (f) (x_0) & \defeq f (x_0) \\
\ddif^n(f) \brac{ x_0 , \ldots , x_n }
& \defeq
\left(
\ddif^{n-1} (f) \brac{ x_0, \dots, x_{n-1} } -
\ddif^{n-1} (f) \brac{ x_1, \dots, x_{n} }
\right)/ (x_0 - x_n) .
\end{align*}
The first divided difference will be frequently used later:
\begin{align*}
    \ddif (f)( x_0,x_1 ) = (f(x_0) - f(x_1)) / (x_0 - x_1).
\end{align*}
One can prove, by induction,  the explicit formula: 
\begin{align}
\ddif^n (f) (x_0, x_1, \dots, x_n)
= \sum_{l=0}^n
f(x_l)  \prod_{s=0, s \neq l}^n (x_l -x_s)^{-1}.
\label{eq:divided-gen-formula}
\end{align}
It is also quite convenient to use squared brackets  to enclose new variables
generated by iterated divided differences and then skip $\ddif^n$:
\begin{align}
\label{eq:f-sbracket} 
    f [ x_0 , \ldots , x_n] \defeq   \ddif^n (f) (x_0, x_1, \dots, x_n).
\end{align}
The following basic properties will be frequently used:
\begin{itemize}
\item Leibniz rule:
\begin{align}
\label{eq:Leibniz-rule}
(fg)[x_0, \dots,x_n] = f(x_0) g[x_0,\dots,x_n] 
+ f[x_0,\dots,x_n] g(x_n) .
\end{align}
\item Composition rule:
\begin{align}
(f[y_0,\dots,y_q, z])[x_0, \dots,x_p]_z = f[y_0,\dots,y_q, x_1, \dots, x_p] .
\label{eq:Composition-rule}
\end{align}
\item The confluent case: suppose there are $\alpha+1$ copies of $x$
in the arguments of the divided difference, then:
\begin{align}
\label{eq:ddif-confluet}
f[y,x, \ldots , x] = \frac{1}{\alpha !}
\partial_x^\alpha f[y,x] .
\end{align}

\item $f[x_0, x_1, \dots, x_n]$ is symmetric in all the arguments.
\end{itemize}

When multivariable functions are concerned, there are several intuitive 
notations to indicate the variable on which the divided difference acts,
for instance:
\begin{enumerate}[i)]
    \item 
using a subscript $ f (x_0 , \ldots , z_i, \ldots , x_n) [x_i, x_{ i+1} ]_{ z_i}$,
see also \eqref{eq:Composition-rule};
\item  using a placeholder $\bullet$:  
$ f (x_0 , \ldots , \bullet, \ldots , x_n) [x_i, x_{ i+1} ]$;

\item directly enclosing the variables
$ f (x_0 , \ldots , [x_i, x_{ i+1} ] , \ldots , x_n) $
    when there is no confusion on the position. 
\end{enumerate}

Much like partial derivatives, given a multiindex
$\alpha = (\alpha_0 , \ldots , \alpha_n) \in \Z^{n+1}_{ \ge 0}$ and
a function $f (x_0 , \ldots , x_n )$, one can form
$ \ddif^{\alpha} (f)$, which is the function obtained by
applying the divided difference $\alpha_j$-times 
on $j$-th argument of $f$\footnote{ the order does not matter
}, where $ 0\le  j \le  n$.
In accordance with \eqref{eq:f-sbracket},
evaluation of   $ \ddif^{ (\alpha_0 , \ldots , \alpha_n)} (f)$ reads
\begin{align}
\label{eq:divdif-alpha}
f \brac{  [ x_{ 0,0} , \ldots , x_{ 0, \alpha_0}] , \ldots ,
[ x_{ n,0} , \ldots , x_{ n, \alpha_n}]
} .
\end{align}
For instance, we can consider the divided difference of $f$ at the $j$-th argument:
$\ddif_j (f) \defeq \ddif^{(0, \ldots , 1 , \ldots ,0)} (f)$, with
$j=0, \ldots , n$, which is part of the face maps defined   
$ \facemaps^{(j, n)}$ in \cref{eq:face-j-for-h}:
\begin{align*}
\facemaps^{(j , n)} (f) (x_0  , \ldots , x_{ n+1} ) \defeq
f \brac{ x_0 , \ldots ,[ x_j , x_{ j+1 } ] , x_{ j +2 },  \ldots , x_{ n + 1 } }.  
\end{align*}
Note that, beside $ \ddif_j (f)$,  there is a shift of position of the
arguments after index $j$:
that is $x_{ j+2}$ is at the $(j+1)$-th slot, 
$ \ldots$, $x_{ n+1}$ is at the $n$-th slot.
In fact, such shift can be implemented by the $j$-th face map
$\delta_j^{(n)}: [n] \to [n+1]$ in the simplicial category  $\Delta$, 
cf. \cref{sec:cyc-DelC}:
\begin{align*}
\delta_j^{(n)}: 0\mapsto 0, \ldots , j-1 \mapsto  j-1, \, \,
j\mapsto j+1, \ldots , n \mapsto  n+1.
\end{align*}
One expects the corresponding simplicial relations:
\begin{prop}
\label{prop:spl-rlt-del-i-j-prf}
The operators $ \facemaps^{(j ,n)} : \funsp(n) \to \funsp (n+1)$, 
with $ j=0, \ldots , n$, satisfy 
\begin{align}
\label{eq:spl-rlt-del-i-j-prf}
\facemaps^{(j , n+1)} \facemaps^{(i , n)} =
\facemaps^{(i,n+1)} \facemaps_{  } ^{( j-1, n)} , \, \,
\text{ for}  \,
i < j .
\end{align}
\end{prop}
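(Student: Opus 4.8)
The plan is to evaluate both composites on an arbitrary $f \in \funsp(n)$ at a point $(x_0,\ldots,x_{n+2})$ and to compare the resulting divided-difference expressions slot by slot. The guiding observation is that a single face map $\facemaps^{(k,m)}$ performs two logically independent operations: it inserts a \emph{first} divided difference $[x_k,x_{k+1}]$ into the $k$-th argument of its input, and it relabels the arguments sitting beyond position $k$ exactly according to the coface map $\delta_k^{(m)}$. Since $i<j$ forces $i+1\le j$, the way the outer and inner maps interact splits into precisely two cases, and I would organize the verification around them.

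First I would treat the \emph{disjoint} case $i+1<j$. Writing $g\defeq\facemaps^{(i,n)}(f)$ and then applying $\facemaps^{(j,n+1)}$, the pair $\set{x_i,x_{i+1}}$ merged by the inner map and the pair $\set{x_j,x_{j+1}}$ merged by the outer map are disjoint, so they produce two separate first divided differences landing in two different arguments of $f$: I would check that the $i$-th argument receives $[x_i,x_{i+1}]$, the $(j-1)$-th argument receives $[x_j,x_{j+1}]$, and every remaining single variable is placed by the composite relabelling. Carrying out the same unwinding for $\facemaps^{(i,n+1)}\facemaps^{(j-1,n)}(f)$ yields the identical assignment; the only nontrivial point is that the two relabellings coincide, which is exactly the simplicial identity $\delta^{(n+1)}_{j}\delta^{(n)}_{i}=\delta^{(n+1)}_{i}\delta^{(n)}_{j-1}$ in $\Delta$. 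In this case the two sides are literally the same function, with no input from the algebra of divided differences.

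The substantive case is the \emph{adjacent} one $j=i+1$, for which the identity reduces to comparing $\facemaps^{(i+1,n+1)}\facemaps^{(i,n)}$ with $\facemaps^{(i,n+1)}\facemaps^{(i,n)}$. Now the two merges share their middle variable, so the $i$-th argument of $f$ is fed a \emph{twice-iterated} first divided difference in the three consecutive variables $x_i,x_{i+1},x_{i+2}$, while all other arguments agree: one composite produces the nesting $[\,[x_i,x_{i+1}],x_{i+2}\,]$ and the other $[\,x_i,[x_{i+1},x_{i+2}]\,]$. The key step is to show that both nestings equal the genuine second divided difference, namely $f$ with $\ddif^2$ applied in the $i$-th slot at $x_i,x_{i+1},x_{i+2}$. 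I would obtain this from the recursive definition of $\ddif^n$ together with the symmetry of $f[x_0,\ldots,x_n]$ in its arguments, or equivalently from the composition rule \eqref{eq:Composition-rule}, using the explicit formula \eqref{eq:divided-gen-formula} as a direct cross-check. Since this is a pointwise algebraic identity, I would first verify it for pairwise distinct $x_\ell$ and then extend it to coincident arguments by continuity, invoking \eqref{eq:ddif-confluet} where confluent values are needed.

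I expect the main obstacle to be this adjacent case: making the manipulation of the nested divided difference precise and confirming that the two nesting orders genuinely collapse to the same symmetric second divided difference $f[x_i,x_{i+1},x_{i+2}]$. By contrast, the index bookkeeping in the disjoint case is routine once the shifts are encoded through the coface maps $\delta^{(m)}_k$, and the reduction to pairwise distinct arguments lets me postpone any delicate limiting arguments until the very end.
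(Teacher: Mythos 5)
Your proposal is correct and follows essentially the same route as the paper's own proof: both split the verification into the disjoint case $i<j-1$, where each composite produces two separate first divided differences $[x_i,x_{i+1}]$ and $[x_j,x_{j+1}]$ whose placement is governed by the simplicial identity $\delta^{(n+1)}_{j}\delta^{(n)}_{i}=\delta^{(n+1)}_{i}\delta^{(n)}_{j-1}$, and the adjacent case $j=i+1$, where both composites yield the iterated divided difference $[x_i,x_{i+1},x_{i+2}]$ in the $i$-th slot. The only difference is one of explicitness: you verify that the two nestings $[\,[x_i,x_{i+1}],x_{i+2}\,]$ and $[\,x_i,[x_{i+1},x_{i+2}]\,]$ both collapse to the symmetric second divided difference $f[x_i,x_{i+1},x_{i+2}]$, a point the paper absorbs silently into its bracket notation via the symmetry and composition properties of \cref{sec:ddiff}.
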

\begin{proof}
Start with the left hand side $ \facemaps^{( i , n)} (f)$:
\begin{align*}
f (x_0 , \ldots , x_{ n} )
\xrightarrow{ \facemaps^{(i , n)}}
f  \brac{ x_0 , \ldots ,[ x_i , x_{ i+1 } ] , \ldots , x_{ n + 1 }   } ,
\end{align*}
to further apply $ \facemaps^{( j , n+1)}$, since $j >i$, the divided difference
occurs on $\set{x_{ i+1} , \ldots , x_{ n+1}  }$, thus
\begin{align*}
&\, \,
f  \brac{ x_0 , \ldots ,[ x_i , x_{ i+1 } ] , \ldots , x_{ n + 1 }   } \\
\xrightarrow{ \facemaps_j^{(n + 1)}} &\, \,
\begin{cases}
f  \brac{ x_0 , \ldots ,[ x_i , x_{ i+1 } , x_{ i+2} ] , \ldots , x_{ n + 1 }   }
& \text{for $j = i+1$} \\
f  \brac{ x_0 , \ldots ,[ x_i , x_{ i+1 } ] , \ldots, [ x_j , x_{ j+1 } ]
\ldots , x_{ n + 1 }   }
& \text{for $j > i+1$ .}
\end{cases}
\end{align*}
On the other side,
\begin{align*}
f (x_0 , \ldots , x_{ n} )
\xrightarrow{ \facemaps ^{( j-1, n)}}
f  \brac{ x_0 , \ldots ,[ x_{ j-1}  , x_{ j } ] , \ldots , x_{ n + 1 }   } .
\end{align*}
In the next step, the divided difference is taken at the $i$-th argument,
which is before $j-1$: $0 \le i \le  j-1$,
\begin{align*}
&\, \,
f  \brac{ x_0 , \ldots ,[ x_{ j-1}  , x_{j} ] , \ldots , x_{ n + 1 }   } \\
\xrightarrow{ \facemaps_i^{(n + 1)}} &\, \,
\begin{cases}
f  \brac{ x_0 , \ldots ,[ x_i , x_{ i+1 } , x_{ i+2} ] , \ldots , x_{ n + 1 }   }
& \text{for $i = j - 1 $} \\
f  \brac{ x_0 , \ldots ,[ x_i , x_{ i+1 } ] , \ldots, [ x_j , x_{ j+1 } ]
\ldots , x_{ n + 1 }   }
& \text{for $ i < j-1$ .}
\end{cases}
\end{align*}
\end{proof}

\subsection{ Degeneracy Maps  $\degenmaps^{(j , n)}$ and the Simplicial
Relations }

There are obvious reductions for
$ \mathscr S_h (f) \brac{ \rho_1 \otimes  \cdots \otimes  \rho_n }$
when some of the factors are equal to $1$, or at least, they commutes with
$h$ and other $\rho$'s.
Such redundancy of rearrangement can be removed by the degeneracy transformations
on $f$ that reduces the number of its arguments  by one.
\begin{lem}
\label{lem:degenmaps-j-for-h}
When $\rho_{ j+1} = 1$ in 
$  \rho_1 \otimes \cdots  \otimes  \rho_n  $, 
where $j = 0, \ldots , n-1$, we have
\begin{align*}
\mathscr S_h ( f)
\brac{ \rho_1 \otimes \cdots  \otimes  \rho_n \big |_{ \rho_{ j+1} =1}   }
=
\mathscr S_h \brac{ \degenmaps^{(j,n)} (f) }
\brac{ \rho_1 \otimes \cdots  \otimes \widehat{\rho_{ j+1} } \otimes
\cdots  \otimes     \rho_n } ,
\end{align*}
where $ \widehat{ \bullet }$ means $ \rho_{ j+ 1} $ is removed
and $\degenmaps^{(j, n)}: \funsp (n) \to \funsp( n-1 )$
is the restriction map  onto the hyperplane 
$ \set{ (x_0 , \ldots , x_n) : x_j = x_{ j+1}} \subset \R^{n+1}$:
\begin{align}
\label{eq:degenmaps-j-for-h}
\degenmaps^{(j , n)}(f) (x_0, \ldots, x_{ n-1}) =
f (x_0, \ldots, x_j, x_j, x_{ j+1}, \ldots, x_{ n-1}  ) .
\end{align}
\end{lem}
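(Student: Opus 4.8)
The plan is to go back to the defining integral \eqref{eq:scr-S-f-of-rho} and exploit the single algebraic fact that all the exponentials are built from one and the same operator $h$, so adjacent factors merge: $e^{i\xi_j h}e^{i\xi_{j+1}h}=e^{i(\xi_j+\xi_{j+1})h}$. First I would substitute $\rho_{j+1}=1$ into the integrand of $\mathscr S_h^{(n)}(f)$, so that the two exponentials $e^{i\xi_j h}$ and $e^{i\xi_{j+1}h}$ become adjacent (no factor separates them) and collapse to $e^{i(\xi_j+\xi_{j+1})h}$. The resulting integrand depends on $\xi=(\xi_0,\ldots,\xi_n)$ only through the $n$-tuple $y=(\xi_0,\ldots,\xi_{j-1},\xi_j+\xi_{j+1},\xi_{j+2},\ldots,\xi_n)$, which is precisely the arrangement of exponentials appearing in $\mathscr S_h^{(n-1)}$ acting on the shortened tensor $\rho_1\otimes\cdots\otimes\widehat{\rho_{j+1}}\otimes\cdots\otimes\rho_n$.

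To turn this observation into the stated identity I would reduce to functions of separating variables, which are dense by \eqref{eq:ele-tns-2-fun-sp-var} and the nuclearity recalled in \cref{sec:rearr-op-fun}. For an elementary tensor $f=f_0\otimes\cdots\otimes f_n$ the Fourier integral factorizes and the Schwartz calculus becomes the ordinary functional calculus in each slot, $\mathscr S_h(f)(\rho)=f_0(h)\rho_1 f_1(h)\cdots\rho_n f_n(h)$. Setting $\rho_{j+1}=1$ then merges the two middle scalars into $f_j(h)f_{j+1}(h)=(f_jf_{j+1})(h)$, while on the function side \eqref{eq:degenmaps-j-for-h} gives $\degenmaps^{(j,n)}(f)=f_0\otimes\cdots\otimes(f_jf_{j+1})\otimes\cdots\otimes f_n$; the two sides visibly coincide. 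Both sides of the asserted equality are linear and continuous in $f$ for the smooth Fréchet topology, so the identity extends from the dense set of elementary tensors to all of $\funsp(n)$.

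Alternatively, and without invoking density, one can work directly on Fourier transforms: the collapse above realizes the integrand as $\widehat f(\xi)$ times an operator-valued function of $y=\Phi(\xi)$, where $\Phi\colon\R^{n+1}\to\R^n$ is the linear surjection summing the $j$-th and $(j+1)$-st coordinates, and Fubini identifies the fibre integral of $\widehat f$ along $\ker\Phi$ with $\widehat{\degenmaps^{(j,n)}(f)}$. This is nothing but the standard duality between restricting a function to the hyperplane $\set{x_j=x_{j+1}}$ and projecting its Fourier transform, read off from \eqref{eq:hat-f-nrm}. I expect the only real obstacle to be bookkeeping rather than substance: keeping the index shift after slot $j$ consistent on both the operator and the function sides, and, in the Fourier route, matching the push-forward measure on $\ker\Phi$ with the normalization in \eqref{eq:hat-f-nrm}. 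The density route sidesteps the measure-theoretic point entirely, at the cost of the (routine) continuity check in $f$, which the nuclear Fréchet structure supplies.
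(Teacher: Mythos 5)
Your proof is correct, and its main line (first and third paragraphs) is essentially the paper's own argument: the paper likewise substitutes $\rho_{j+1}=1$ into the defining integral \eqref{eq:scr-S-f-of-rho}, observes that $e^{i\xi_j h}$ and $e^{i\xi_{j+1}h}$ then act at the same slot of the shortened tensor, and reads off the underlying spectral function $f\brac{h^{(0)},\ldots,h^{(j)},h^{(j)},\ldots,h^{(n-1)}}$ as the restriction \eqref{eq:degenmaps-j-for-h}. If anything, you are more explicit than the paper: the identification of the collapsed $(n+1)$-fold Fourier integral with $\mathscr S_h^{(n-1)}\brac{\degenmaps^{(j,n)}(f)}$ --- your Fubini/pushforward step along $\ker\Phi$, matching the normalization \eqref{eq:hat-f-nrm} --- is left implicit there, where the last displayed line is simply declared to have the restricted function as its spectral function. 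Your middle paragraph (elementary tensors plus nuclear density) is a genuinely different, equally valid route, and it is in effect ratified later in the paper: \eqref{eq:deg_n-sp-var} records that on separating variables $\degenmaps^{(j,n)}$ is exactly the slotwise multiplication $f_j\otimes f_{j+1}\mapsto f_jf_{j+1}$, which is what your computation $f_j(h)\cdot 1\cdot f_{j+1}(h)=(f_jf_{j+1})(h)$ produces. What the density route buys is freedom from the measure-theoretic bookkeeping; what it costs is the continuity of $f\mapsto\mathscr S_h(f)(\rho)$ and of $\degenmaps^{(j,n)}$ in the Fr\'echet topology, which you correctly flag as routine (Schwartz seminorms dominate the $L^1$-norm of $\widehat f$, so the operator norm of $\mathscr S_h(f)(\rho)$ is controlled).
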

\begin{proof}
According to \cref{eq:scr-S-f-of-rho} with $ \rho_{ j +1 } = 1$,
\begin{align*}
&\, \,
f \brac{ h^{(0)}, \ldots, h^{(n)} } \brac{
\rho_1 \otimes \cdots  \otimes  1 \otimes  \cdots \otimes  \rho_n
} \\
= &\, \,
\int_{ \R^{n+1} } \widehat{ f} (\xi)
e^{\xi_0 h} \rho_1 e^{\xi_1 h} \cdots \rho_{ i-1}
e^{\xi_j h} \cdot  1 \cdot e^{\xi_{ j+1}  h} \rho_{ j+1} \cdots
\rho_n e^{\xi_n h}
\\
= &\, \,
\int_{ \R^{n+1} } \widehat{ f} (\xi)
e^{\xi_0 h^{(0)}} \cdots  e^{ \xi_j   h^{(j)}}
e^{  \xi_{ j+1}  h^{(j)}} \cdots  e^{  \xi_{n}  h^{(n-1)}}
\brac{ \rho_1 \otimes \cdots  \otimes \widehat{\rho_{ j+1} }
\otimes   \cdots  \otimes     \rho_n
} \\
= &\, \,
f \brac{ h^{(0)}, \cdots , h^{(j)}, h^{(j)}, h^{(j+1)}, \ldots, h^{(n-1)} }
\brac{ \rho_1 \otimes \cdots  \otimes \widehat{\rho_{ j+1} }
\otimes   \cdots  \otimes     \rho_n
} .
\end{align*}
The desired transformation of
$\degenmaps^{(j,n)} (f)$ in \cref{eq:degenmaps-j-for-h} is taken to be
the underlying spectral function shown in the last line above.
\end{proof}

\begin{prop}
\label{prop:spl-rlt-sig-i-j-prf}
In addition to the relations in \cref{prop:spl-rlt-del-i-j-prf}, we have
\begin{align}
\label{eq:spl-rlt-sig-i-j-prf}
\degenmaps_j^{(n -1)} \degenmaps_i^{(n)}  =
\degenmaps_i^{(n -1)} \degenmaps_{ j + 1} ^{(n)} ,
\, \, \, \,  \text{ for $ i < j$.}
\end{align}
\end{prop}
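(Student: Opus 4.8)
The plan is to prove the simplicial identity $\degenmaps_j^{(n-1)} \degenmaps_i^{(n)} = \degenmaps_i^{(n-1)} \degenmaps_{j+1}^{(n)}$ for $i < j$ by direct computation, unwinding both sides using the explicit restriction formula \eqref{eq:degenmaps-j-for-h}. Since each $\degenmaps^{(j,n)}$ is simply the operation of duplicating the $j$-th argument (equivalently, restricting to the hyperplane $x_j = x_{j+1}$), the whole statement reduces to tracking where repeated arguments land after two successive duplications.

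First I would apply the inner map on the left-hand side. Starting from $f(x_0,\ldots,x_n)$, the map $\degenmaps_i^{(n)}$ duplicates the $i$-th slot, yielding $f(x_0,\ldots,x_i,x_i,x_{i+1},\ldots,x_{n-1})$ as a function of $n$ variables. Then I apply $\degenmaps_j^{(n-1)}$, which duplicates the $j$-th slot of this new function; because $i < j$, this second duplication occurs strictly to the right of the already-doubled pair, so it acts on one of the original arguments $x_{j}$ (now shifted by one position). The net effect is a function carrying two duplicated pairs, one at position $i$ and one at position $j+1$ in the original indexing.

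Next I would carry out the right-hand side in the same way. The map $\degenmaps_{j+1}^{(n)}$ first duplicates the $(j+1)$-th slot, and then $\degenmaps_i^{(n-1)}$ duplicates the $i$-th slot; since $i < j < j+1$, this second duplication lies strictly to the left of the first doubled pair, so it does not disturb it. I expect both sides to produce exactly the same function with the same two pairs of repeated arguments, establishing the identity. The only care needed is to get the index bookkeeping right, since duplicating a slot shifts all subsequent indices; this is precisely the same shifting phenomenon already handled in \cref{prop:spl-rlt-del-i-j-prf}, so the argument is parallel and I would organize it with an explicit case-free diagram chase rather than separate cases.

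The main obstacle, if any, is purely notational: keeping the relabeling of arguments consistent across the two compositions so that the claimed equality of the resulting functions is manifest. There is no genuine analytic content here, since everything descends from the set-theoretic fact that the underlying codegeneracy maps $\sigma_j \colon [n] \to [n-1]$ in $\Delta$ satisfy $\sigma_i \sigma_j = \sigma_{j+1}\sigma_i$ for $i \le j$; indeed an alternative, cleaner route I would mention is to observe that $\degenmaps^{(j,n)}$ is the pullback along $\sigma_j$ of the identification of arguments, so the desired relation is simply the contravariant image of the standard codegeneracy relation in $\Delta$. I would likely present the direct computation as the primary proof and remark on the simplicial-category interpretation as confirmation.
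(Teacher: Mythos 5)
Your proposal is correct, and your closing ``alternative, cleaner route'' is in fact precisely the paper's proof: the paper does not perform the duplication chase at all, but observes that \eqref{eq:degenmaps-j-for-h} realizes $\degenmaps^{(j,n)}$ as substitution along the degeneracy $\sigma_j$ of the simplicial category $\Delta$, namely $\degenmaps_j(f)(x_0,\dots,x_{n-1}) = f\brac{x_{\sigma_j(0)},\dots,x_{\sigma_j(n)}}$, so that $\degenmaps_j\degenmaps_i$ acts on the subscripts of the variables by $\sigma_j\sigma_i$, and \eqref{eq:spl-rlt-sig-i-j-prf} reduces verbatim to the relation $\sigma_j^{(n+1)}\sigma_i^{(n)} = \sigma_i^{(n+1)}\sigma_{j+1}^{(n)}$ in $\Delta$. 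Your primary route, the direct two-step computation, is sound: both composites evaluate $f$ on the tuple $\brac{x_0,\dots,x_i,x_i,x_{i+1},\dots,x_j,x_j,x_{j+1},\dots,x_{n-2}}$, with doubled pairs in slots $(i,i+1)$ and $(j+1,j+2)$ of $f$, and this persists in the boundary case $j=i+1$, where the second duplication lands immediately to the right of the first doubled pair; so the only cost of your route is exactly the index-shifting bookkeeping that the paper's one-line reduction outsources to the standard identity in $\Delta$ (a pattern the paper reuses, e.g.\ in \cref{prop:spl-rlt-del-i-j-prf} and for the cyclic relations). Two small cautions on your remark: the relation as you wrote it, $\sigma_i\sigma_j = \sigma_{j+1}\sigma_i$ for $i\le j$, is the standard one only under left-to-right (diagrammatic) composition, whereas in the paper's convention it reads $\sigma_j\sigma_i = \sigma_i\sigma_{j+1}$; and the substitution $f \mapsto f\brac{x_{\sigma(\cdot)}}$ composes \emph{covariantly} on subscripts here ($\degenmaps_j\degenmaps_i$ corresponds to $\sigma_j\circ\sigma_i$, not the reversed composite), so calling the desired relation the ``contravariant image'' of the codegeneracy relation is slightly loose, though harmless since the identity is invoked with the same index pattern.
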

\begin{proof}

Observe that the right hand side of \cref{eq:degenmaps-j-for-h} is the ``dual'' of
the degeneracy maps $\sigma_j^{(n)}: [n] \to [n-1]$ which hits $j$ twice
in the simplicial category $\Delta$:
\begin{align*}
\sigma_j^{(n)}: 0 \mapsto 0, \ldots , j \mapsto j , j+1 \mapsto  j,
j+2 \mapsto  j+1 , \ldots , n \mapsto  n-1.
\end{align*}
In more detail, similar to the computation in \cref{prop:spl-rlt-del-i-j-prf},
we can rewrite \cref{eq:degenmaps-j-for-h} as:
\begin{align*}
f (x_0 , \ldots , x_n ) \xrightarrow{ \degenmaps_j^{(n)}}
f \brac{ x_{ \sigma_j^{(n)} (0)} , \ldots , x_{ \sigma_j^{(n)} (n)}   } .
\end{align*}
Consequently, composition of $ \degenmaps_j^{(n -1)} \degenmaps_i^{(n)}$
becomes the composition of $ \sigma_j^{(n + 1)} \sigma_i^{(n)}$ on
the subscripts of the variables $x_j$:
\begin{align*}
f (x_0 , \ldots , x_n )
\xrightarrow{   \degenmaps_j^{(n +1)}  \degenmaps_i^{(n)} }
f \brac{ x_{  \sigma_j^{(n + 1)} \sigma_i^{(n)} (0)} , \ldots ,
x_{ \sigma_j^{(n + 1)} \sigma_i^{(n)} (n)}   } .
\end{align*}
Similar result holds for $  \degenmaps_i^{(n -1)} \degenmaps_{ j + 1} ^{(n)}  $.
Therefore \cref{eq:spl-rlt-sig-i-j-prf} follows from the relations
$ \sigma_j^{(n + 1)} \sigma_i^{(n)} = \sigma_i^{(n + 1)} \sigma_{ j+1} ^{(n)}$
in the simplicial category $\Delta$.
\end{proof}

\subsection{
Face Maps $\facemaps^{(j, n)}$ from Variation
}
\label{sec:facemaps} 

Let $h \in \mathscr A$ be self-adjoint element and $f \in C^\infty(U) $ be
a smooth function  in a neighborhood of $\mathrm{Spec} (h) \subset U$.
By viewing $h = h^{(0)} ( 1_{ \mathcal A} )$
as the left-multiplication operator evaluated at the identity,
the functional calculus in \cref{eq:scr-S-f-defn} defines
an element $f (h) \defeq \mathscr S_h (f) ( 1_{ \mathcal A} )\in  \mathscr A$
in the algebra.
\begin{prop}
\label{prop:expan-f-h+b}
For $b \sim 0 \in \mathscr A$ self-adjoint, we have
the Taylor expansion around $h$:
\begin{align}
\label{eq:expan-f-h+b}
f ( h +b) \sim_{ b \searrow 0}
f(h) + \sum_{ n= 1 }^\infty
\mathscr S_h ( \ddif^n (f) )
( b \otimes \cdots  \otimes  b) ,
\end{align}
where $  b \otimes \cdots  \otimes  b = b^{\otimes  n}$.
\end{prop}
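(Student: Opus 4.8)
The plan is to reduce everything to the single exponential $e_\xi(x) \defeq e^{i\xi x}$, for which both sides of \eqref{eq:expan-f-h+b} can be computed explicitly, and then to recover a general $f$ by superposition. Since only the values of $f$ near $\mathrm{Spec}(h)$ matter and, for $\norm{b}$ small enough, $\mathrm{Spec}(h+b)$ stays inside a fixed compact neighbourhood $K \subset U$, I would first multiply $f$ by a cutoff supported near $K$; this alters neither $f(h+b)$ nor the operators $\mathscr S_h(\ddif^n (f))$ for small $b$, and it places us in the setting of \eqref{eq:scr-S-f-defn} where $\widehat f$ is integrable and $f(x) = \int_\R \widehat f(\xi)\, e^{i\xi x}\, d\xi$. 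Because $\ddif^n$ is linear and commutes with the $\xi$-integral, and because $\mathscr S_h(\cdot)(b^{\otimes n})$ is linear, it then suffices to prove the expansion for each $e_\xi$ and integrate the result against $\widehat f(\xi)$.

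For $f = e_\xi$ the left-hand side is governed by the Duhamel (Dyson) series
\begin{align*}
e^{i\xi(h+b)} = \sum_{n=0}^\infty (i\xi)^n
\int_{\Delta_n} e^{i\xi t_0 h}\, b\, e^{i\xi t_1 h}\, b \cdots b\, e^{i\xi t_n h}\, dt,
\end{align*}
where $\Delta_n = \set{ (t_0,\ldots,t_n) : t_j \ge 0,\ \textstyle\sum_j t_j = 1 }$ carries its standard measure of total mass $1/n!$; the series converges absolutely in $\mathscr A$ since $h$ is self-adjoint, so the $n$-th term is bounded by $\brac{\abs{\xi}\norm{b}}^n/n!$. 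On the other hand, the Hermite--Genocchi formula $\ddif^n(g)(x_0,\ldots,x_n) = \int_{\Delta_n} g^{(n)}\brac{ \sum_j t_j x_j }\, dt$, specialised to $g = e_\xi$ (so $g^{(n)} = (i\xi)^n e_\xi$), gives $\ddif^n(e_\xi) = (i\xi)^n \int_{\Delta_n} e^{i\xi \sum_j t_j x_j}\, dt$. Feeding this into the definition \eqref{eq:scr-S-f-of-rho} and using that the multiplication operators $h^{(0)},\ldots,h^{(n)}$ pairwise commute, so that $e^{i\xi(t_0 h^{(0)} + \cdots + t_n h^{(n)})}(b^{\otimes n}) = e^{i\xi t_0 h}\, b\, e^{i\xi t_1 h}\, b \cdots b\, e^{i\xi t_n h}$, one recognises the $n$-th Duhamel term as precisely $\mathscr S_h(\ddif^n (e_\xi))(b^{\otimes n})$. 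This establishes \eqref{eq:expan-f-h+b} for every $e_\xi$.

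Finally I would integrate the $e_\xi$-identity against $\widehat f(\xi)$ and evaluate at $1_{\mathcal A}$, recalling $f(h+b) = \mathscr S_{h+b}(f)(1_{\mathcal A}) = \int_\R \widehat f(\xi)\, e^{i\xi(h+b)}\, d\xi$. Interchanging the $\xi$-integral with the sum over $n$ is legitimate by dominated convergence, since the $n$-th summand is dominated by $\abs{\widehat f(\xi)}\,\brac{\abs{\xi}\norm{b}}^n/n!$ and $\widehat f$ has rapid decay; summing first in $n$ then reproduces $\sum_n \mathscr S_h(\ddif^n (f))(b^{\otimes n})$. I expect the main difficulty to be analytic bookkeeping rather than conceptual: justifying the cutoff reduction (that both sides are unchanged for small $b$ once $f$ is localised near $\mathrm{Spec}(h)$), verifying $\mathrm{Spec}(h+b)\subset U$ for $b\sim 0$ so that $f(h+b)$ is defined, and controlling the remainder uniformly so that the convergent series genuinely yields the asymptotic statement $\sim_{b\searrow 0}$. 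The algebraic heart --- matching the Duhamel terms to divided differences through Hermite--Genocchi --- is where the simplicial structure carried by the $\ddif^n$ enters, and is the step I would present in full detail.
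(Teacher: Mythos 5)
Your algebraic core is sound: the Duhamel--Dyson expansion of $e^{i\xi(h+b)}$, the Hermite--Genocchi representation of $\ddif^n$, and the identification of each simplex integral with $\mathscr S_h\brac{\ddif^n(e_\xi)}(b^{\otimes n})$ through \eqref{eq:scr-S-f-defn} are all correct, as is the cutoff reduction (upper semicontinuity of the spectrum, plus the facts that $\ddif^n(g)$ depends only on the values of $g$ at the argument points and that the joint functional calculus of $h^{(0)},\ldots,h^{(n)}$ only sees values near $\mathrm{Spec}(h)^{n+1}$). Be aware that the paper itself offers no argument here --- its ``proof'' is a citation to \cite[Prop.~3.7]{leschdivideddifference} --- so yours is a genuinely self-contained alternative; Lesch's route iterates the exact first-order divided-difference identity to produce a finite expansion with explicit remainder, whereas your simplex-integral matching has the merit of exhibiting directly the structure that the rest of the paper builds on.

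There is, however, one step that fails as written: the final interchange of $\sum_n$ and $\int d\xi$. Your dominating function sums to $\abs{\widehat f(\xi)}\,e^{\abs{\xi}\norm{b}}$, and for a nonzero $f\in C_c^\infty$ this is \emph{not} integrable for any $\norm{b}>0$: rapid decay of $\widehat f$ is polynomial of every order but never exponential, since exponential decay on $\R$ would make $f$ extend holomorphically to a strip, contradicting compact support. Worse, the series on the right of \eqref{eq:expan-f-h+b} need not converge at all: $\norm{\mathscr S_h(\ddif^n f)(b^{\otimes n})}$ is controlled by $\norm{b}^n \int\abs{\widehat f(\xi)}\abs{\xi}^n\,d\xi/n!$, and these moments grow super-factorially whenever $f$ is not real-analytic. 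This is precisely why the proposition asserts only an asymptotic expansion $\sim_{b\searrow 0}$ and not an equality, so ``summing first in $n$'' is an attempt to prove a statement that is false in general. The repair costs nothing you don't already have: truncate the Duhamel series at order $N$, with remainder bounded in norm by $(\abs{\xi}\norm{b})^{N+1}/(N+1)!$, then integrate against $\widehat f$ and use $\int\abs{\widehat f(\xi)}\abs{\xi}^{N+1}\,d\xi<\infty$ to conclude $f(h+b)=f(h)+\sum_{n=1}^{N}\mathscr S_h(\ddif^n f)(b^{\otimes n})+O\brac{\norm{b}^{N+1}}$ for every $N$, which is exactly the meaning of \eqref{eq:expan-f-h+b}.
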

\begin{proof}
We refer to \cite[Prop. 3.7]{leschdivideddifference}.
\end{proof}

Let $\nabla : \mathscr A \to \mathscr A$ be a derivation and
$\alpha_t : \mathscr A \to \mathscr A$, $t \in  \R$,
be the associated one-parameter group of automorphisms. Namely,
$ \nabla a = \frac{d}{dt} \big |_{ t =0} \alpha_t (a)$
, for any $a \in  \mathscr A$ whenever the derivative exists.
They can be lifted to the tensor product $ \mathscr A^{\otimes n}$ and to the
functional calculus in the usual way:
\begin{align}
\label{eq:nabla-on-rho}
    &\, \,
\nabla \brac{ \rho_1 \otimes  \cdots \otimes   \rho_n }
= \frac{d}{dt} \bigg |_{ t =0}
\alpha_t ( \rho_1) \otimes \cdots \otimes \alpha_t ( \rho_n) ,
\\
    &\, \,
\label{eq:nabla-on-Sf}
\nabla \brac{ \mathscr S_h (f) } =
\frac{d}{dt} \bigg |_{ t =0}
\mathscr S_{   \alpha_t (h)} (f ) ,\, \,
\end{align}

For smooth enough $h$ (w.r.t. $\nabla$), we have
$\alpha_t (h) = h + b(t)$
where $ b (t) \sim_{ t \mapsto 0}
\nabla (h) t + \nabla^2 (h) t^2 /2 + \cdots $.
The expansion in \cref{eq:expan-f-h+b} implies:
\begin{align*}
\mathscr S_{ \alpha_t (h)} (f)  =
f (h) + t \mathscr S_h ( \ddif (f) ) ( \nabla h) + O (t^2) .
\end{align*}
Hence $\nabla \brac{  \mathscr S_{ \alpha_t (h)} (f)   } =
\mathscr S_h (\ddif (f)) ( \nabla h)$. The simplest example is the case
$f(x) = x^2$, then  $ \ddif (f) (x_0 , x_1) = x_0 + x_1$, which agrees with the
result from the Leibniz rule:
$\nabla ( h^2) = h \nabla (h) + \nabla (h) h
= \brac{ h^{(0)} + h^{(1)} } ( \nabla h )$.

Now, let us replace  $h$
by the left or the right multiplication,
that is  $h^{(0)}$ or $h^{(1)}$,
similar argument gives: for any $\rho \in  \mathscr A$,
\begin{align}
\label{eq:nabla-S-f-1var}
\nabla \brac{  f(h^{(0)})}  (\rho) =
\mathscr S_h ( \tilde f_{ \mathrm{lt}} ) ( \nabla(h) \otimes  \rho)
, \, \, \, \,
\nabla \brac{  f(h^{(1)})}  (\rho) =
\mathscr S_h ( \tilde f_{ \mathrm{rt}} ) (\rho \otimes  \nabla(h)),
\end{align}
where $ \tilde f_{ \mathrm{lt}} \brac{ x_0 , x_1 , x_2 }  =
\ddif (f) (x_0  , x_1)$ and
$ \tilde f_{ \mathrm{rt}} \brac{ x_0 , x_1 , x_2 }  =
\ddif (f) (x_1  , x_2) $.
The variation of $ \mathscr S_h (f)$ for
general $(n+1)$-variable spectral functions
$f( x_0 , \ldots , x_{ n} )$ can be computed following the Leibniz rule,
that is, we can differentiate $\alpha_t(h)^{(j)}$ one by one from $j=0$ to  $n$.
There is also a shift of indices needed to be taken care of
as shown in the case of $ \tilde f_{ \mathrm{rt}}$ define as above.
We summarize the result in a proposition below.
\begin{prop}
\label{prop:face-j-for-h}
For a fixed self-adjoint $h \in \mathscr  A$ and let $f \in \funsp(n)$ be
a spectral function with $(n+1)$ arguments.
The corresponding derivation $\nabla \brac{ \mathscr S_h (f) }$ defined in
\cref{eq:scr-S-f-of-rho}
is given by
\begin{align}
\label{eq:nabla-S-f}
\nabla\brac{ \mathscr S_h (f) } =
\sum_{ j=0 }^n
\mathscr S_h \brac{   \facemaps^{(j , n)} (f) }
\circ \nabla(h)_{(j , n)},
\end{align}
where  $ \nabla(h)_{(j , n)} : \mathscr A^{\otimes  n} \to \mathscr A^{\otimes
n+1}$ is the operator  of inserting  $\nabla(h)$ at the  $j$-th slot\footnote{
Note that we put $(j,n)$ in subscript to distinguish from the notation
$  \nabla(h)^{(j , n)} $ in \cref{eq:h-j-n-mlt-jslot} whose target space is
$ \mathscr A$.
},
see \cref{eq:nabla-S-f-example}.
The transformations
$\facemaps^{(j , n)}(f) : \funsp(n) \to \funsp(n+1)$
is given by applying divided difference at the  $j$-th argument with
a shift of indices after $j$:
$x_l \to x_{ l+1} $ for $ j+1 \le l \le  n$,
\begin{align}
\label{eq:face-j-for-h}
\facemaps^{(j , n)}(f) (x_0 , \ldots , x_{ n+1} ) =
f(x_0 , \ldots, \bullet, x_{ j+2} \ldots , x_{ n+1} ) 
[ x_j , x_{ j+1} ],
\, \,
\end{align}
where $\bullet$ is at the  $j$-th slot, $ 0\le  j \le  n$.
\end{prop}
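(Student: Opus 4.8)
The plan is to compute $\nabla\brac{\mathscr S_h(f)}$ directly from its definition in \cref{eq:nabla-S-f-1var}'s governing principle, namely as $\frac{d}{dt}\big|_{t=0}\mathscr S_{\alpha_t(h)}(f)$ with $\alpha_t(h) = h + b(t)$ and $b(t) \sim t\,\nabla(h) + O(t^2)$, and to organize the computation as a slot-by-slot Leibniz rule. The key structural observation is that the multiplication operators $h^{(0)},\ldots,h^{(n)}$ acting at distinct slots mutually commute, so $\mathscr S_h(f) = f\brac{h^{(0)},\ldots,h^{(n)}}$ is an honest function of $n+1$ commuting operators and $\mathscr S_{h+b}(f) = f\brac{(h+b)^{(0)},\ldots,(h+b)^{(n)}}$. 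Differentiating in $t$ then splits, by the product rule, into a sum over $j=0,\ldots,n$ of terms in which the perturbation is differentiated only inside the $j$-th copy $(h+b(t))^{(j)}$ while the remaining copies stay frozen at $h$.

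First I would make the single-slot variation precise. Fixing $j$ and freezing all arguments but the $j$-th, the contribution of the $j$-th copy is governed by the one-variable first-order expansion already recorded in \cref{prop:expan-f-h+b}: extracting the term linear in the perturbation produces the first divided difference of $f$ taken in its $j$-th argument. Concretely, this introduces a fresh variable next to $x_j$, turning the $j$-th slot into the pair $[x_j, x_{j+1}]$, and simultaneously inserts the operator $\nabla(h)$ at the $j$-th tensor slot. At the level of the integral representation \cref{eq:scr-S-f-of-rho} this is just Duhamel's formula, $\frac{d}{dt}\big|_{t=0} e^{i\xi_j(h+tb)} = \int_0^1 e^{is\xi_j h}\,(i\xi_j b)\,e^{i(1-s)\xi_j h}\,ds$, and the combined $s$- and $\xi_j$-integrals Fourier-transform precisely to the divided difference in the $j$-th variable.

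Next I would account for the relabelling of arguments. Inserting a new variable at position $j$ pushes every later argument one place to the right, $x_l \mapsto x_{l+1}$ for $j+1 \le l \le n$, which is exactly the shift built into the face map $\facemaps^{(j,n)}(f)$ of \cref{eq:face-j-for-h}; the accompanying insertion of $\nabla(h)$ at the $j$-th slot is the operator $\nabla(h)_{(j , n)}$. Summing the $n+1$ single-slot contributions then yields \cref{eq:nabla-S-f}. As consistency checks one recovers the one-variable formulas \cref{eq:nabla-S-f-1var} for $f(h^{(0)})$ and $f(h^{(1)})$, where the shift is precisely what distinguishes $\ddif(f)(x_0,x_1)$ from $\ddif(f)(x_1,x_2)$, together with the elementary Leibniz computation for $f(x)=x^2$.

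The main obstacle is the single-slot reduction just described: justifying that each multivariable variation in slot $j$ reduces to the one-variable divided-difference rule, and in particular pinning down the bookkeeping so that the newly created divided-difference variable and the newly inserted operator slot sit at the \emph{same} index $j$. One has to confirm that the order in which the $n+1$ copies are differentiated is immaterial, which follows from the commutativity of the $h^{(j)}$ together with the symmetry of divided differences, and that the Duhamel $s$-integral Fourier-transforms to $\ddif_j$ uniformly in the spectator variables, so that the remaining arguments may be carried along as inert parameters throughout the computation.
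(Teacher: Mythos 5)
Your proposal is correct and takes essentially the same route as the paper's own (very terse) proof: differentiate in $t$ through the multivariable function $f\brac{\alpha_t(h^{(0)}), \ldots, \alpha_t(h^{(n)})}$ of \eqref{eq:nabla-on-Sf}, apply the Leibniz rule slot by slot, reduce each single-slot contribution to the one-variable divided-difference formula \eqref{eq:nabla-S-f-1var}, and track the index shift $x_l \mapsto x_{l+1}$ that produces the face map \eqref{eq:face-j-for-h}. The only difference is one of detail: where the paper outsources the single-slot step to the Taylor expansion of \cref{prop:expan-f-h+b} (i.e., to \cite{leschdivideddifference}), you justify it explicitly via Duhamel's formula and the Fourier-transform identification of the $s$-integral with the divided difference, which is a correct and welcome elaboration of the same argument.
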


\begin{proof}
\cref{eq:nabla-S-f} is an extension  of \eqref{eq:nabla-S-f-1var} from single  
to multivariable calculus. In fact, it is obtained  by differentiating in $t$ 
trough the multivariable function 
$ f ( \alpha_t (h^{(0)}) , \ldots , \alpha_t (h^{(n)}) )$ 
in \eqref{eq:nabla-on-Sf}, 
while the computation of the partial derivatives
$ (\partial_{ x_j} f) ( d x_j / dt )$ can be reduced in the single variable 
scenario in \eqref{eq:nabla-S-f-1var}. 
\end{proof}

Some remarks on notations:
\begin{itemize}

\item
The meaning of \eqref{eq:nabla-S-f} is better explained by examples.
For $f = f (x_0 , x_1  ,x_2)$, the evaluation of the r.h.s of
\eqref{eq:nabla-S-f} at $ \rho_1 \otimes  \rho_2 \in  \mathscr A^{\otimes  2}$
is given by:
\begin{align}
\label{eq:nabla-S-f-example}
&\, \,
\nabla \brac{
\mathscr S_h (f)
} ( \rho_1 \otimes  \rho_2) \\
= &\, \,
\mathscr S_h ( \facemaps^{(0,2)} (f))
\brac{ \nabla (h) \otimes  \rho_1 \otimes  \rho_2 }
+
\mathscr S_h ( \facemaps^{(1,2)} (f))
\brac{  \rho_1 \otimes \nabla (h)  \otimes  \rho_2 }
\nonumber \\
+  &\, \,
\mathscr S_h ( \facemaps^{(2,2)} (f))
\brac{  \rho_1 \otimes   \rho_2 \otimes  \nabla (h)} .
\nonumber
\end{align}

\item 
One can use another notation mentioned in \cref{sec:ddiff}
for the divided difference in \eqref{eq:face-j-for-h}:
\begin{align}
\label{eq:sbrac-as-ddif}
f\brac{ x_0 , \ldots , [x_j , x_{ j+1}] , x_{ j+2} ,
\ldots ,x_{ n+1}    }  .
\end{align}
Note that the $(n+1)$-tuple
$ \brac{ x_0 , \ldots , [x_j , x_{ j+1}] , x_{ j+2} , \ldots ,x_{ n+1}    }$
describes a map $\psi$ from $[n+1] = \set{0, \ldots , n+1}$ 
to $ [n] = \set{0, \ldots , n}$ (the $n$-slots)
by assigning the $(n+2)$ variables labeled by $x$ to their positions 
in the  $(n+1)$-tuple. 
It is a non-decreasing map  hitting $j \in  [n]$ twice, that is
the pre-image of the stationary point
$ \psi^{-1} (j) = \set{ x_j , x_{ j+1}}$ is enclosed in the
squared brackets. 
With regards to the notations in \cref{sec:cyc-DelC},
$\psi$ is nothing but the degeneracy map
$ \sigma_j^{[n+1]}: [n+1]  \to [n]  $
in the simplicial category $\Delta$. This observation is useful when 
computing the iterations of the face maps $\facemaps^{(j , n)}$, 
the results will be of the form akin to  \eqref{eq:sbrac-as-ddif}:
\begin{align*}
    f\brac{ \ldots , 
    [ \bullet , \ldots , \bullet], \ldots ,  
    [ \bullet , \ldots , \bullet], \ldots   
} 
\end{align*}
that gives rise to a $\psi: [m] \to [n]$ in $\Delta$, where $m , n$ are the
number of variables and slots  respectively. The squared brackets $[ \ldots ]$
enclose the stationary points of $\psi$ and indicate iterated divided
difference on the function $f$.
\end{itemize}

\subsection{Compatibility between $\facemaps^{(j , n)}$ and $\degenmaps^{(j ,n)}$}
\label{sec:face-deg-cmpt}

Unfortunately, we do not have the full simplicial structure, which has
something to do with the basic fact in calculus that differentiation and
evaluation do not commute.
\cref{prop:face-deg-cmpt} consists of relations that are taken from cyclic theory 
while \cref{prop:face-deg-cmpt-i-i} states the differences. 
\begin{prop}
\label{prop:face-deg-cmpt}
We have
\begin{align}
\label{eq:face-deg-cmpt}
\pmb\sigma_j \pmb\delta_i =
\begin{cases}
\pmb\delta_i \pmb\sigma_{ j-1}  & \text{ for $i<j-1 $,}\\
\pmb\delta_{ i-1} \pmb\sigma_j  & \text{ for $i > j+1$.}
\end{cases}
\end{align}
\end{prop}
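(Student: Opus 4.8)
The plan is to follow the same template as the proofs of \cref{prop:spl-rlt-del-i-j-prf} and \cref{prop:spl-rlt-sig-i-j-prf}: track the effect of each transformation on the arguments of $f$ and reduce the identity to the statement that two operations acting on disjoint slots commute. The conceptual point is that the two excluded cases $i=j$ and $i=j+1$ are precisely where the divided-difference bracket produced by $\facemaps_i$ overlaps the duplicated pair produced by $\degenmaps_j$; those genuine interactions (differentiation versus restriction to the diagonal) are deferred to \cref{prop:face-deg-cmpt-i-i}. In both cases treated here the bracket $[x_i,x_{i+1}]$ of \eqref{eq:sbrac-as-ddif} and the repeated argument of \eqref{eq:degenmaps-j-for-h} land in non-overlapping positions, so the divided difference and the variable duplication may be performed in either order.

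Concretely, for the first case $i<j-1$ I would start from $f\in\funsp(n)$, apply $\facemaps^{(i,n)}$ to obtain $f\brac{x_0,\ldots,[x_i,x_{i+1}],\ldots,x_{n+1}}$ as in \eqref{eq:face-j-for-h}, and then apply $\degenmaps^{(j,n+1)}$, which sets the $j$-th and $(j+1)$-th variables equal via \eqref{eq:degenmaps-j-for-h}. Since $i+1<j$, the bracket sits strictly to the left of the repeated pair, and after resolving the index shift that $\facemaps$ introduces beyond slot $i$ one reads off $f\brac{x_0,\ldots,[x_i,x_{i+1}],\ldots,x_j,x_j,\ldots}$ with the duplicated $x_j$ occupying the $(j-1)$-th and $j$-th arguments of $f$. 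Carrying out $\facemaps_i\degenmaps_{j-1}$, that is, first duplicating the $(j-1)$-th variable by $\degenmaps^{(j-1,n)}$ and then taking the divided difference at slot $i$ by $\facemaps^{(i,n-1)}$, produces the same function, because $j-1>i$ guarantees the duplicated variable is $x_j$ and the bracket is untouched. The equality of the underlying index bookkeeping is exactly the simplicial relation $\sigma_j\delta_i=\delta_i\sigma_{j-1}$ for $i<j$, restricted to $i<j-1$.

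The second case $i>j+1$ is the mirror image: here the repeated pair of $\degenmaps_j$ lies strictly to the left of the bracket of $\facemaps_i$, and the same substitution computation — now invoking the index relation $\sigma_j\delta_i=\delta_{i-1}\sigma_j$ for $i>j+1$ — shows $\degenmaps_j\facemaps_i=\facemaps_{i-1}\degenmaps_j$, the drop of the face index from $i$ to $i-1$ accounting for the slot consumed by the degeneracy to its left.

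I expect the only real work to be the index bookkeeping: verifying that the shift $x_l\mapsto x_{l+1}$ for $l>i$ built into $\facemaps$ and the deletion of a slot built into $\degenmaps$ compose so that the bracket and the duplicated pair end up in matching positions on the two sides. This is routine but error-prone, and it is exactly where the hypotheses $i<j-1$ and $i>j+1$ are used, namely to keep the supports of the two operations disjoint so that no Leibniz-type correction to the divided difference arises.
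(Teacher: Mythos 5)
Your proposal is correct and takes essentially the same route as the paper's own proof: a direct substitution computation tracking how the divided-difference bracket produced by $\facemaps_i$ and the duplicated variable produced by $\degenmaps_j$ occupy disjoint slots, with the index shift beyond slot $i$ yielding the pair $(x_j, x_j)$ in matching positions on both sides. The only difference is cosmetic: you spell out the mirror case $i > j+1$, which the paper leaves to the reader as a similar calculation.
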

\begin{proof}
We prove the first one
$ \pmb\sigma_j \pmb\delta_i = \pmb\delta_i \pmb\sigma_{ j-1}$ and leave the
other to the reader  since the calculations are quite similar.
For $f = f (x_0  , \ldots , x_n) \in  \funsp(n)$,
\begin{align*}
f \xrightarrow{\facemaps_i^{(n)}}
f \brac{ x_0, \ldots , [x_i , x_{ i+1} ] , \ldots , x_{ n+1}  }.
\end{align*}
With $j > i +1$, the result of $ \degenmaps_j^{(n+1)}\facemaps_i^{(n)} (f)$
is given by:
\begin{align*}
f \brac{ x_0, \ldots , [x_i , x_{ i+1} ] , \ldots , x_{ n+1}  }
\xrightarrow{ \degenmaps_j^{(n + 1)}}
f \brac{ x_0, \ldots , [x_i , x_{ i+1} ] , \ldots , x_j , x_j , \ldots , x_{ n}  }.
\end{align*}
For the other side,
\begin{align*}
&\, \,
f \xrightarrow{  \degenmaps_{ j-1} ^{(n)}}
f \brac{ x_0, \ldots , x_{ j-1}  , x_{ j-1}  , \ldots, x_{ n-1}   }   \\
\xrightarrow{ \degenmaps_j^{(n + 1)}} &\, \,
f \brac{ x_0, \ldots , [x_i , x_{ i+1} ] , \ldots , x_j , x_j , \ldots , x_{ n}  }.
\end{align*}
For the second step, since the position of $ [x_i , x_{ i+1} ]$ is before  $j-1$,
there is a shift on the subscript of variables $x_l$ for all  $l>i+1$ which
gives rise to  $ (x_j , x_j)$.
The two sides indeed agree.
\end{proof}

The uncovered cases in \cref{eq:face-deg-cmpt} are $\degenmaps_j \facemaps_j$
and $\facemaps_j \degenmaps_j$ ( set $i= j -1$ or  $i = j+1$ for the right hand
side of \cref{eq:face-deg-cmpt}).

\begin{prop}
\label{prop:face-deg-cmpt-i-i}
We have
\begin{align}
\label{eq:face-deg-cmpt-i-i-I}
\pmb\sigma^{(i,n+1)} \pmb\delta^{(i,n)} &= \partial_{ x_i } ,
\\
\label{eq:face-deg-cmpt-i-i-II}
\pmb\delta^{(i,n+1)} \pmb\sigma^{(i,n)} &=
\pmb\sigma^{(i+1 , n+1)} \pmb\delta^{(i,n)} +
\pmb\sigma^{(i,n+1)} \pmb\delta^{(i+1 , n)} ,
\end{align}
where $\partial_{ x_i } $ is the partial derivative acting on 
$ f (x_0 , \ldots x_n ) \in  \funsp (n)$.
\end{prop}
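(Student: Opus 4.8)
The plan is to verify both identities by direct computation, using that only the \emph{first} divided difference and the diagonal restriction of a single pair of slots are involved; these are exactly the two compositions left uncovered by \cref{prop:face-deg-cmpt}. Throughout I freeze all arguments except those in the two active slots, writing
\[
F(a,b) = f(x_0,\ldots,x_{i-1},a,b,x_{i+2},\ldots,x_n),
\]
so that each of the maps below reduces to an operation on the two-variable function $F$, after which the bookkeeping of index shifts is the only thing to watch.

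For \eqref{eq:face-deg-cmpt-i-i-I} I first apply $\pmb\delta_i$ to land on the divided difference at the $i$-th slot,
\[
f(x_0,\ldots,x_n) \xrightarrow{\ \pmb\delta_i\ } f\brac{ x_0,\ldots,[x_i,x_{i+1}],x_{i+2},\ldots,x_{n+1} },
\]
and then apply $\pmb\sigma_i$, which by \eqref{eq:degenmaps-j-for-h} identifies the $i$-th and $(i+1)$-th variables. The bracket $[x_i,x_{i+1}]$ thereby degenerates to $[x_i,x_i]$, while the accompanying shift returns $x_{i+2},\ldots,x_{n+1}$ to $x_{i+1},\ldots,x_n$. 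The confluent case of the divided difference (cf.\ \eqref{eq:ddif-confluet}), which for two coincident arguments reads $f[x_i,x_i]=\partial_{x_i}f$, then yields exactly \eqref{eq:face-deg-cmpt-i-i-I}. This is the single place where differentiation genuinely enters, reflecting the remark that evaluation and differentiation do not commute.

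For \eqref{eq:face-deg-cmpt-i-i-II} I compute the three terms in the $F$-notation. The left-hand side first duplicates slot $i$ and then takes the first divided difference there, so with $G(t)=F(t,t)$ one gets
\[
\pmb\delta_i\pmb\sigma_i (f) = G[x_i,x_{i+1}] = \frac{F(x_i,x_i) - F(x_{i+1},x_{i+1})}{x_i - x_{i+1}}.
\]
The two terms on the right, after the same reductions and careful tracking of the index shift induced by $\pmb\delta$, give
\[
\pmb\sigma_{i+1}\pmb\delta_i (f) = \frac{F(x_i,x_{i+1}) - F(x_{i+1},x_{i+1})}{x_i - x_{i+1}}, \qquad \pmb\sigma_i\pmb\delta_{i+1}(f) = \frac{F(x_i,x_i) - F(x_i,x_{i+1})}{x_i - x_{i+1}}.
\]
Adding the last two, the mixed term $F(x_i,x_{i+1})$ cancels and the left-hand side is recovered, proving \eqref{eq:face-deg-cmpt-i-i-II}. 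Conceptually this is the divided-difference chain rule for the diagonal map $t \mapsto F(t,t)$, the discrete analogue of $\tfrac{d}{dt}F(t,t) = (\partial_a F + \partial_b F)(t,t)$.

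The only real difficulty is the bookkeeping: each $\pmb\delta$ relabels all slots beyond the one on which it acts, so when it is pre- or post-composed with a $\pmb\sigma$ one must track precisely which physical variable lands in which slot of $f$. Once all three compositions are written in the common $F$-notation this becomes mechanical and the identity collapses to the elementary cancellation above; invoking the confluent relation correctly for \eqref{eq:face-deg-cmpt-i-i-I} is the only subtle point.
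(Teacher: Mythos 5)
Your proof is correct and takes essentially the same route as the paper's: you establish \eqref{eq:face-deg-cmpt-i-i-I} via the confluent case \eqref{eq:ddif-confluet} applied to the degenerate bracket $[x_i,x_i]$, and \eqref{eq:face-deg-cmpt-i-i-II} via the same telescoping cancellation (inserting and removing the mixed term $F(x_i,x_{i+1})$) that the paper isolates as the lemma \eqref{eq:f-z-z-divided}. The only presentational difference is that you compute all three composites independently in the frozen two-variable $F$-notation and verify the sum, where the paper splits the left-hand side first and then identifies the two resulting pieces; this is not a substantive change.
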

\begin{proof}
\cref{eq:face-deg-cmpt-i-i-I}  follows from  the confluent version of
divided difference (cf. \cref{eq:ddif-confluet}):
\begin{align*}
\pmb\sigma^{(i,n+1)} \pmb\delta^{(i,n)} (f) (x_0, \ldots, x_n)
&=
\pmb\delta^{(i,n)} (f) (x_0, \ldots, x_i, x_i, \ldots, x_n) \\
& =
f \brac{   x_0, \ldots, [ x_i, x_i ], \ldots, x_n} \\
&=
\brac{ \partial_{ x_i}  f} (x_0, \ldots, x_n) .
\end{align*}
To prove \cref{eq:face-deg-cmpt-i-i-II}, let us start with the left hand side
\begin{align*}
f (x_0 , \ldots , x_{ n+1} ) \xrightarrow{ \degenmaps^{(i,n+1)} }
f \brac{ x_0 , \ldots ,x_i , x_i , \ldots , x_n } .
\end{align*}
To continue, we apply \cref{eq:f-z-z-divided} to compute the
divided difference of the function
$x_i \mapsto  f \brac{ \ldots , x_i , x_i , \ldots }$ as above,
\begin{align*}
&\, \,
f \brac{ x_0 , \ldots ,x_i , x_i , \ldots , x_n } \\
\xrightarrow{ \facemaps^{(i,n)}} &\, \,
f \brac{ x_0, \ldots, x_i , [x_i , x_{ i+1} ] , \ldots, x_n  } +
f \brac{ x_0, \ldots, [x_i , x_{ i+1} ] , x_{ i+1},   \ldots, x_n  } ,
\end{align*}
where the two terms are equal to 
$\degenmaps^{(i,n+1)} \facemaps^{( i+1,n)} ( f ) $ and 
$ \degenmaps^{( i+1,n+2)} \facemaps^{(i,n)} (f)$ respectively. 
Let us compute $\degenmaps^{(i,n+1)} \facemaps^{( i+1,n)} ( f ) $
 and leave the verification of the second one to the reader. 
Indeed, 
\begin{align*}
    &\, \,
f \xrightarrow{ \facemaps^{( i+1,n)}}
f \brac{ x_0 , \ldots , x_i , [x_{ i+1} , x_{ i+2} ] , x_{ i+3}
\ldots , x_{ n+1}    } 
 \\ 
\xrightarrow{ \degenmaps^{(i,n+1)}} &\, \,
f \brac{ x_0 , \ldots , x_i , [x_{ i} , x_{ i+1} ] , x_{ i+2}  \ldots , x_{ n} } .
\end{align*}
\end{proof}

\begin{lem}
Consider the function $z \mapsto  f(z, z)$ induced from a two-variable
function $f$, its divided difference is given by:
\begin{align}
\label{eq:f-z-z-divided}
f(z,z)[x,y]_z
& =
f(x, z) [x,y]_z  + f(z,y) [x,y]_z   \\
& =
f(y, z) [x,y]_z  + f(z,x) [x,y]_z .
\nonumber
\end{align}
\end{lem}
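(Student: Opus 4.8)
The plan is to prove both identities by a direct appeal to the definition of the first divided difference, $g[x,y]_z = \brac{g(x)-g(y)}/(x-y)$, applied to the diagonal restriction $g(z) = f(z,z)$. First I would record the left-hand side explicitly: substituting $z=x$ and $z=y$ gives
\[
f(z,z)[x,y]_z = \frac{f(x,x)-f(y,y)}{x-y}.
\]
The entire content of the lemma is then a telescoping of this single difference through an intermediate value, and the two lines correspond to two different choices of that intermediate term.

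For the first line I would insert and subtract $f(x,y)$. The essential point is to read each summand on the right correctly: in $f(x,z)[x,y]_z$ the first slot is frozen at $x$ and only the second slot carries the active variable $z$, so evaluating at $z=x$ and $z=y$ yields $\brac{f(x,x)-f(x,y)}/(x-y)$; likewise $f(z,y)[x,y]_z$ freezes the second slot at $y$ and gives $\brac{f(x,y)-f(y,y)}/(x-y)$. Adding the two, the $f(x,y)$ terms cancel and the sum collapses to $\brac{f(x,x)-f(y,y)}/(x-y)$, which is exactly the left-hand side. The second line has the identical structure but routes the telescope through the off-diagonal value $f(y,x)$ instead: here $f(y,z)[x,y]_z = \brac{f(y,x)-f(y,y)}/(x-y)$ and $f(z,x)[x,y]_z = \brac{f(x,x)-f(y,x)}/(x-y)$, whose sum again reduces to $\brac{f(x,x)-f(y,y)}/(x-y)$. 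Since both telescopes share the same endpoints, the two right-hand sides coincide, establishing the asserted equality of the two expressions.

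I expect no genuine obstacle here; the only thing demanding care is the bookkeeping of which argument of $f$ is held fixed at an endpoint $x$ or $y$ and which is the active divided-difference variable $z$ — indeed, the two choices of frozen slot are precisely what produce the two distinct intermediate values $f(x,y)$ and $f(y,x)$, hence the two lines of \eqref{eq:f-z-z-divided}. Conceptually, the two identities are nothing but the two symmetric forms of the Leibniz rule \eqref{eq:Leibniz-rule}: for a separated function $f(u,v)=p(u)q(v)$ one has $f(z,z)=p(z)q(z)$, and the first (resp.\ second) identity reduces under this factorization to $(pq)[x,y]=p(x)\,q[x,y]+p[x,y]\,q(y)$ (resp.\ the form with the evaluation points swapped). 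Since both sides of \eqref{eq:f-z-z-divided} are linear in $f$ and separated functions span a dense subspace by \eqref{eq:ele-tns-2-fun-sp-var}, this already suffices; the direct telescoping above, however, is unconditional and avoids any density argument, so I would present it as the main proof.
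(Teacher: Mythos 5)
Your proof is correct and follows essentially the same route as the paper's: a direct telescoping computation through the intermediate value $f(x,y)$ (resp.\ $f(y,x)$), which is exactly the paper's one-line verification of the first identity. You additionally check the second line explicitly, which the paper leaves implicit; the Leibniz-rule remark is a nice aside but not needed.
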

\begin{proof}
The computation is straightforward:
\begin{align*}
f(x, z) [x,y]_z  + f(z,y) [x,y]_z
&=
\frac{ f(x,x) - f (x,y) }{x-y} +
\frac{ f(x,y) - f (y,y) }{x-y} \\
&=
\frac{ f(x, x) - f (y , y)}{x - y}
=          f(z,z)[x,y]_z .
\end{align*}
\end{proof}

\subsection{Tracial Functionals and the Cyclic Operators $\cyc_{(n)}$}
\label{sec:tr-tau} 

After the discussion of differentiation, let us now further assume that
the algebra $\mathscr A$ admits a tracial functional
$\varphi_0 : \mathscr A \to \mathbb{C} $ playing the role of integration.
The trace property has its usual meaning:
$ \varphi_0 ( \rho \rho' ) = \varphi_0 ( \rho ' \rho)$,
$\forall \rho, \rho' \in \mathscr A$.
In particular, when applying $\varphi_0$ to the integrand of r.h.s of
\cref{eq:scr-S-f-of-rho}, we see that
\begin{align*}
\varphi_0
\brac{ e^{\xi_0 h} \rho_1 e^{\xi_1 h} \rho_2 e^{\xi_2 h}} =
\varphi_0
\brac{   e^{(\xi_0 + \xi_2) h} \rho_1 e^{\xi_1 h} \rho_2},
\end{align*}
which leads to another reduction
$\degenmaps^{( n , n)} : \funsp(n)  \to \funsp (n-1)$:
\begin{align}
\label{eq:extr-deg-n-n}
\degenmaps^{ (n , n) }  (f) (x_0, \ldots, x_{ n-1} ) =
f(  x_0, x_1, \ldots,   x_{ n-1}  , x_0 ) .
\end{align}

\begin{lem}
\label{lem:extr-deg-n-n-full}
The extra degeneracy  $\degenmaps^{(n , n)}$ defined above
\cref{eq:extr-deg-n-n}
is responsible for
\begin{align}
&\, \,
\varphi_0\brac{
\mathscr S_h (f)
\brac{
\rho_1 \otimes \cdots  \otimes  \rho_n
}
} \nonumber \\
= &\, \,
\varphi_0\brac{
\mathscr S_h \brac{ \degenmaps^{( n , n)} (f) }
\brac{ \rho_1 \otimes \cdots  \otimes  \rho_{ n-1} } \cdot \rho_n
}.
\label{eq:extr-deg-n-n-full}
\end{align}

\end{lem}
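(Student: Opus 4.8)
The plan is to work directly with the integral representation \eqref{eq:scr-S-f-of-rho} of the Schwartz functional calculus and to exploit the trace property of $\varphi_0$. First I would apply $\varphi_0$ to the integrand on the left-hand side, writing
\begin{align*}
\varphi_0\brac{\mathscr S_h(f)(\rho)} =
\int_{\R^{n+1}} \widehat{f}(\xi)\,
\varphi_0\brac{e^{i\xi_0 h}\rho_1 e^{i\xi_1 h}\cdots \rho_n e^{i\xi_n h}}\, d\xi ,
\end{align*}
and then use cyclicity of $\varphi_0$ to transport the trailing factor $e^{i\xi_n h}$ to the front. Since $e^{i\xi_n h}$ and $e^{i\xi_0 h}$ are both spectral functions of $h$ they commute, so the two merge into $e^{i(\xi_0+\xi_n)h}$, yielding the integrand $\varphi_0\brac{e^{i(\xi_0+\xi_n)h}\rho_1 e^{i\xi_1 h}\cdots \rho_{n-1} e^{i\xi_{n-1}h}\rho_n}$. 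This is exactly the $n$-variable generalization of the reduction displayed just before \eqref{eq:extr-deg-n-n}.

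The second ingredient is to identify this collapsed frequency with the effect of the restriction $\degenmaps^{(n,n)}$ on the Fourier side. Reading off \eqref{eq:hat-f-nrm}, setting $x_n=x_0$ in $f$ gives
\begin{align*}
\degenmaps^{(n,n)}(f)(x_0,\ldots,x_{n-1})
= \int_{\R^{n+1}} \widehat{f}(\xi)\,
e^{i\brac{(\xi_0+\xi_n)x_0 + \xi_1 x_1 + \cdots + \xi_{n-1}x_{n-1}}}\, d\xi ,
\end{align*}
so the dual variable of the single argument $x_0$ of $\degenmaps^{(n,n)}(f)$ absorbs both $\xi_0$ and $\xi_n$; equivalently, $\widehat{\degenmaps^{(n,n)}(f)}$ is the pushforward of $\widehat{f}$ along $\xi\mapsto(\xi_0+\xi_n,\xi_1,\ldots,\xi_{n-1})$. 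Feeding this representation into the $(n-1)$-variable functional calculus \eqref{eq:scr-S-f-of-rho} (where the $0$-th frequency sits in the leftmost exponential) produces exactly
\begin{align*}
\mathscr S_h\brac{\degenmaps^{(n,n)}(f)}(\rho_1\otimes\cdots\otimes\rho_{n-1})
= \int_{\R^{n+1}}\widehat{f}(\xi)\,
e^{i(\xi_0+\xi_n)h}\rho_1 e^{i\xi_1 h}\cdots \rho_{n-1}e^{i\xi_{n-1}h}\, d\xi .
\end{align*}
Right-multiplying by $\rho_n$ and applying $\varphi_0$ then reproduces the integrand obtained in the first step, and matching the two expressions gives \eqref{eq:extr-deg-n-n-full}.

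The step I expect to require the most care is the Fourier bookkeeping in passing from the $(n+1)$-variable symbol $\widehat{f}$ to the $n$-variable symbol of $\degenmaps^{(n,n)}(f)$: one must verify that restriction to the diagonal $x_n=x_0$ genuinely corresponds to integrating $\widehat{f}$ over the fibers $\set{\xi_0+\xi_n=\text{const}}$, and that Fubini is licensed so the frequency integral may be interleaved with $\varphi_0$ and the operator products. Once this slot-by-slot identity between the two operator-valued integrands is in place, the remaining manipulations are merely the commutativity of spectral functions of $h$ together with a single use of the trace property, both of which are routine.
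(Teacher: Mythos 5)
Your proposal is correct and follows essentially the same route as the paper's own proof: apply $\varphi_0$ to the integral representation \eqref{eq:scr-S-f-of-rho}, use the trace property once to move $e^{i\xi_n h}$ to the front where it merges with $e^{i\xi_0 h}$, and recognize the resulting Fourier integral as the functional calculus of the restricted function $f(x_0,\ldots,x_{n-1},x_0)=\degenmaps^{(n,n)}(f)$. Your extra care about the Fourier-side bookkeeping (the pushforward of $\widehat f$ along $\xi\mapsto(\xi_0+\xi_n,\xi_1,\ldots,\xi_{n-1})$ and the use of Fubini) is a harmless elaboration of a step the paper treats as immediate.
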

\begin{rem}
We shall see in \cref{eq:extr-deg-chk} that
$\degenmaps^{(n,n)}  = \degenmaps^{(0,n)} \cyc_{(n-1)}^{-1 }$,
which corresponds to the extra degeneracy \cref{eq:extr-deg-cyc}
in the cyclic category $\Delta C$.
\end{rem}
\begin{proof}
We first apply $\varphi_0$ to the right hand side of
\cref{eq:scr-S-f-of-rho} and then use the trace property to
move $ e^{\xi_n h}$ to the very left:
\begin{align*}
&\, \,
\varphi_0\brac{
f \brac{ h^{(0)}, \ldots, h^{(n)} } \brac{
\rho_1 \otimes \cdots  \otimes  \rho_n
}
} =
\varphi_0 \brac{
\int_{ \R^{n+1} } \widehat{ f} (\xi)
e^{\xi_0 h} \rho_1 e^{\xi_1 h} \cdots \rho_n e^{\xi_n h}
}
\\
= &\, \,
\varphi_0 \brac{
\int_{ \R^{n+1} } \widehat{ f} (\xi)
e^{\xi_n h} e^{\xi_0 h} \rho_1 e^{\xi_0 h} \cdots \rho_{ n-1} e^{\xi_{ n-1}  h}
\rho_n
}
\\
= &\, \,
\varphi_0\brac{
f \brac{ h^{(0)}, \ldots, h^{(n-1)} , h^{(0)}} \brac{
\rho_1 \otimes \cdots  \otimes  \rho_{ n-1}
}  \cdot  \rho_n}  ,
\end{align*}
where the Fourier transform integral  in the middle line  is exactly the
function in the right hand side of \cref{eq:extr-deg-n-n}.
\end{proof}

The trace property of the functional $\varphi_0$ allows cyclic permutations on
the $\rho$-factors appeared in the local expression \cref{eq:extr-deg-n-n-full}.
\begin{prop}
\label{prop:tau-n-defn}
Given $f (x_0 , \ldots ,x_n) \in \funsp(n)$ and
$(n+1)$ elements
$\rho_1, \ldots , \rho_{ n+1}  \in \mathscr A$,
we have the following cyclic permutation:
\begin{align*}
\varphi_0 \brac{
\mathscr S_h (f)
\brac{ \rho_1 \otimes  \cdots \otimes  \rho_n }
\cdot \rho_{ n+1}
}   =
\varphi_0 \brac{
\mathscr S_h \brac{ \cyc_{(n)} (f)}
\brac{ \rho_2 \otimes  \cdots  \otimes \rho_{ n+1}  }
\cdot  \rho_1
} ,
\end{align*}
where the transformation
$\cyc_{(n)}: \funsp(n) \to \funsp(n)$ is
the corresponding cyclic permutation on arguments of $f$:
\begin{align}
\label{eq:tau-n-defn}
\cyc_{(n)} (f) (x_0 , \ldots , x_n)  =
f ( x_n, x_0,\ldots, x_{ n-1} )  .
\end{align}
\end{prop}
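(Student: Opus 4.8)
The plan is to follow the template of the proof of \cref{lem:extr-deg-n-n-full}: expand the left-hand side through the defining integral \eqref{eq:scr-S-f-of-rho}, execute a single cyclic permutation under the trace $\varphi_0$, and then recognize the resulting integrand as the one coming from $\mathscr S_h(\cyc_{(n)}(f))$. First I would append $\rho_{n+1}$ to the right of \eqref{eq:scr-S-f-of-rho} and write
\begin{align*}
\varphi_0\brac{ \mathscr S_h (f)\brac{ \rho_1 \otimes \cdots \otimes \rho_n } \cdot \rho_{n+1} }
=
\varphi_0 \brac{ \int_{\R^{n+1}} \widehat f(\xi)\,
e^{i\xi_0 h} \rho_1 e^{i\xi_1 h} \cdots \rho_n e^{i\xi_n h} \rho_{n+1}\, d\xi }.
\end{align*}

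Next, applying $\varphi_0(XY) = \varphi_0(YX)$ with $X = e^{i\xi_0 h}\rho_1$, I would move the block $e^{i\xi_0 h}\rho_1$ from the front of the operator word to its end, producing the integrand
\begin{align*}
e^{i\xi_1 h} \rho_2\, e^{i\xi_2 h} \rho_3 \cdots e^{i\xi_n h} \rho_{n+1}\, e^{i\xi_0 h} \rho_1 .
\end{align*}
Reading off the exponents in their new order yields the tuple $(\xi_1, \ldots, \xi_n, \xi_0)$, so I would substitute $\eta = (\xi_1, \ldots, \xi_n, \xi_0)$; this is a cyclic permutation of coordinates whose Jacobian has absolute value $1$, so that $d\xi = d\eta$. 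After the substitution the integrand takes the shape $e^{i\eta_0 h}\rho_2 \cdots e^{i\eta_{n-1}h}\rho_{n+1} e^{i\eta_n h}\rho_1$, which is exactly the operator word appearing in $\mathscr S_h(g)(\rho_2 \otimes \cdots \otimes \rho_{n+1})\cdot \rho_1$, where $g$ denotes the function whose Fourier transform is $\widehat f(\eta_n, \eta_0, \ldots, \eta_{n-1})$.

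It then remains to identify this $g$ with $\cyc_{(n)}(f)$, i.e. to check that the normalized Fourier transform \eqref{eq:hat-f-nrm} intertwines the coordinate cyclic permutation, $\widehat{\cyc_{(n)}(f)} = \cyc_{(n)}(\widehat f)$. I would verify this directly: substituting $f(x) = \int \widehat f(\xi)\, e^{i x\cdot\xi}\, d\xi$ into the definition \eqref{eq:tau-n-defn}, the exponent of $\cyc_{(n)}(f)(x) = f(x_n, x_0, \ldots, x_{n-1})$ becomes $x_n\xi_0 + x_0\xi_1 + \cdots + x_{n-1}\xi_n$, and the very same change of variables $\xi \mapsto \eta$ on the frequency side — legitimate because the pairing $x\cdot\xi$ is invariant under the simultaneous cyclic permutation of $x$ and $\xi$ — gives $\widehat{\cyc_{(n)}(f)}(\eta) = \widehat f(\eta_n, \eta_0, \ldots, \eta_{n-1})$. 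Feeding this back in, the integrand is precisely that of $\mathscr S_h(\cyc_{(n)}(f))(\rho_2 \otimes \cdots \otimes \rho_{n+1}) \cdot \rho_1$, which is the asserted right-hand side.

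The genuine content, and the step I expect to demand the most care, is this last matching of conventions: making sure the direction of the shift in \eqref{eq:tau-n-defn} (which moves $x_n$ to the front) is compatible with the direction in which the trace rotates the operator word, and confirming that the Fourier measure $d\xi$ is truly preserved under the relabelling. Everything else is the trace property together with the substitution $d\xi = d\eta$, exactly as in the single-exponential move carried out in \cref{lem:extr-deg-n-n-full}.
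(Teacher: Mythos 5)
Your proof is correct and takes essentially the same route as the paper's: both expand the left-hand side through the integral \eqref{eq:scr-S-f-of-rho}, apply the trace property once to rotate the block $e^{i\xi_0 h}\rho_1$ to the end of the operator word, and then identify the result as $\mathscr S_h\brac{\cyc_{(n)}(f)}\brac{\rho_2\otimes\cdots\otimes\rho_{n+1}}\cdot\rho_1$. Your explicit frequency-side substitution $\eta=(\xi_1,\ldots,\xi_n,\xi_0)$ and the verification of the intertwining $\widehat{\cyc_{(n)}(f)}=\cyc_{(n)}(\widehat f)$ simply spell out, with correct bookkeeping, what the paper compresses into the remark that ``the coefficient of $\xi_j$ corresponds to the $j$-th argument of $f$.''
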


\begin{proof}
Again, the key is to look at the following part of the Fourier
transform in \cref{eq:scr-S-f-of-rho} in which one can cyclic permute the
factors of the product inside $\varphi_0$ according to the trace property:
\begin{align*}
&\, \,
\varphi_0
\brac{ e^{i \xi_0 h } \rho_1  e^{i \xi_1 h }
\cdots \rho_n e^{i \xi_n h }\cdot  \rho_{ n+1}
}  =
\varphi_0
\brac{  e^{i \xi_1 h } \rho_2
\cdots \rho_n e^{i \xi_n h }\cdot  \rho_{ n+1}
e^{i \xi_0 h } \rho_1
} \\
= &\, \,
\varphi_0\brac{
\brac{
e^{i \xi_1 h^{(0)} } \cdots
e^{i \xi_{ n}  h^{(n-1)} }
e^{i \xi_0 h^{(n)} }
}
\brac{ \rho_2 \otimes  \cdots  \otimes  \rho_{ n+1}  }
\cdot \rho_1
} \\
= &\, \,
\varphi_0 \brac{
\mathscr S_h \brac{ e^{ i \xi_0 x_n
+ i \xi_1 x_0 + \cdots  + i \xi_n x_{ n-1}  } }
}
\brac{ \rho_2 \otimes  \cdots  \otimes  \rho_{ n+1}  }
\cdot \rho_1 .
\end{align*}
The last line gives rise to  the function $ f(x_n , x_0 , \ldots , x_{ n-1} )$
after Fourier transform (the coefficient of $\xi_j$ corresponds to the
$j$-th argument of  $f$).
\end{proof}

\subsection{Compatibility between Cyclic and Simplicial Structures}
\label{sec:cyclic-vs-smpl-rel}

\begin{prop}
\label{prop:cyclic-vs-smpl-rel}
We have the compatibility relations between the cyclic maps and
the degeneracy maps:
\begin{align}
\label{eq:cyclic-rel-sig-lem}
\pmb\tau_{(n)} \degenmaps^{(i ,n+1)}  =
\degenmaps^{( i-1 ,n+1)}  \cyc_{(n+1)}  , \, \,
1\le i \le  n.
\end{align}
In particular, $ \pmb\tau_{ (n)}  \pmb \sigma^{(0,n+1)}
= \pmb \sigma^{(n , n)} \pmb\tau^2_{ (n+1) } $.
Similarly, for the face maps
\begin{align}
\label{eq:cyclic-rel-tau-lem}
\pmb\tau_{ (n)}  \facemaps^{(i , n-1)} =
\facemaps^{(  i-1 , n-1)} \pmb\tau_{(n-1)}, \, \,
i=1, \ldots,n .
\end{align}
It follows that $\facemaps^{(n , n)} = \pmb\tau_{(n)} \facemaps^{(0, n-1)} $.
\end{prop}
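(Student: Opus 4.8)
\emph{Strategy.} All four assertions are operator identities on the graded space $\funsp(\bullet)$, and every generator acts on a spectral function by a transparent operation on the subscripts of its arguments: $\cyc_{(n)}$ cyclically relabels the slots \eqref{eq:tau-n-defn}, $\degenmaps^{(i,n)}$ repeats the $i$-th slot \eqref{eq:degenmaps-j-for-h}, and $\facemaps^{(i,n)}$ inserts a divided-difference bracket at the $i$-th slot together with a shift of the tail \eqref{eq:face-j-for-h}. The plan is to evaluate each side on a generic $f$ and a generic tuple $(x_0,\ldots)$, read off the induced reindexing of the subscripts, and check that the two prescriptions agree. As already noted after \eqref{eq:sbrac-as-ddif} and in \cref{prop:spl-rlt-sig-i-j-prf}, this reindexing is precisely the action of the corresponding morphism of $\Delta C$ on the index set, so each identity reduces to the matching relation of the cyclic category.

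\emph{The degeneracy relation \eqref{eq:cyclic-rel-sig-lem}.} Starting from $f\in\funsp(n+1)$, I would first compute $\cyc_{(n)}\degenmaps^{(i,n+1)}(f)$: the degeneracy produces $f(\ldots,x_i,x_i,\ldots)$ and the cyclic map then carries $x_n$ to the front, so the repeated slot ends up at positions $i,i+1$. On the other side $\degenmaps^{(i-1,n+1)}\cyc_{(n+1)}(f)$ shifts first and repeats at slot $i-1$, landing the repeated slot in the same pair of positions. For $1\le i\le n$ the repeated slot stays in the interior, away from the wrap-around seam, and the two substitutions agree term by term; this is exactly $\tau_n\sigma_i=\sigma_{i-1}\tau_{n+1}$ read off the subscripts. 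The excluded index $i=0$ is the seam case: the same evaluation shows that shifting before repeating forces a second application of $\cyc_{(n+1)}$ to realign the doubled slot, which reproduces the wrap-around relation $\tau_n\sigma_0=\sigma_n\tau_{n+1}^2$ of $\Delta C$ and is the stated ``in particular'' identity.

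\emph{The face relation \eqref{eq:cyclic-rel-tau-lem} and the final consequence.} The same evaluation works for the face maps, now transporting a bracket rather than a repeated variable. Applying $\facemaps^{(i,n-1)}$ to $f\in\funsp(n-1)$ places $[x_i,x_{i+1}]$ at slot $i$; composing with $\cyc_{(n)}$ relabels the slots so that the bracket lands on the pair $[x_{i-1},x_i]$, while $\facemaps^{(i-1,n-1)}\cyc_{(n-1)}(f)$ shifts first and then brackets at slot $i-1$ to give the same pair. The symmetry of the divided difference guarantees that the order in which the shift presents the bracketed pair is immaterial, so for $1\le i\le n$ the two results coincide, reproducing $\tau_n\delta_i=\delta_{i-1}\tau_{n-1}$. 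The boundary index $i=0$ corresponds to the relation $\tau_n\delta_0=\delta_n$: substituting directly into $\cyc_{(n)}\facemaps^{(0,n-1)}$ shows that the bracket $[x_0,x_1]$ placed at slot $0$ is carried by the cyclic shift onto the wrap-around pair $[x_n,x_0]$, i.e.\ the extra/cyclic face, which is the content of the final consequence $\facemaps^{(n,n)}=\cyc_{(n)}\facemaps^{(0,n-1)}$.

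\emph{Main obstacle.} The algebra itself is harmless; the delicate point is the bookkeeping across the cyclic seam. I would have to verify that for the stated ranges $1\le i\le n$ the repeated slot (resp.\ the divided-difference bracket) never straddles the wrap-around position, so that a single application of $\cyc$ suffices on both sides, and then to identify the excluded boundary index correctly---with the $\tau^2$ relation in the degeneracy case and with the extra face $\facemaps^{(n,n)}$ in the face case. The extra subtlety for the face maps is that the cyclic shift can present the bracketed pair in reversed order at the seam, which is exactly why the symmetry of the divided difference is needed to close the argument.
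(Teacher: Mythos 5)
Your proposal is correct and takes essentially the same route as the paper: the paper's proof likewise proceeds by direct evaluation, substituting the cyclically permuted tuple $(x_n,x_0,\ldots,x_{n-1})$ into $\facemaps^{(i,n-1)}(f)$ and matching both sides against $f\brac{x_n,x_0,\ldots,[x_{i-1},x_i],\ldots,x_{n-1}}$ (checking \eqref{eq:cyclic-rel-tau-lem} explicitly and leaving \eqref{eq:cyclic-rel-sig-lem} to the reader). Your additional bookkeeping at the seam---the $\cyc^2_{(n+1)}$ realignment for $i=0$ and the wrap-around bracket $[x_n,x_0]$ giving the extra face of \eqref{eq:last-face-del-n+1}---merely spells out the ``in particular'' consequences the paper states without proof, so no new idea is involved.
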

\begin{proof}
The computation is elementary, we check \cref{eq:cyclic-rel-tau-lem} as an
example and leave \cref{eq:cyclic-rel-sig-lem} to the reader.
Begin with the  left hand side of \cref{eq:cyclic-rel-tau-lem}:
\begin{align*}
\pmb\tau_{ (n)}  \facemaps_i^{(n-1)}
(x_0, \ldots , x_n)
&=
\facemaps_i^{(n-1)}
(x_n , x_0, \ldots , x_{ n-1} ) \\
&=
f \brac{  x_n , x_0, \ldots, [x_{ i-1} , x_{ i}], \ldots,x_{ n-1}  }
\end{align*}
and then the right hand side:
\begin{align*}
\facemaps_{ i-1}^{( n )}
\pmb\tau_{ (n-1) }  (f) (x_0, \ldots , x_n)
&=
\pmb\tau_{ ( n-1 ) }  (f) (x_0, \ldots , x_n)
\brac{     x_0, \ldots, [x_{ i-1} , x_{ i}], \ldots,x_{ n}  } \\
&=
f \brac{ x_n, x_0, \ldots, [x_{ i-1} , x_{ i}], \ldots,x_{ n-1}  } .
\end{align*}
\end{proof}

\section{Main Results}
\label{sec:main} 
Let us denote by $\mathcal C$ the category with objects
$ \funsp (n)$, $n=0,1, \ldots,$ and morphisms
generated by the transformations 
$\set{\facemaps^{(j, n)}  , \degenmaps^{(j, n)},
\cyc_{ (n)} }_{ n=0}^{\infty} $  introduced in 
\cref{prop:face-j-for-h}, \cref{lem:degenmaps-j-for-h} and 
\cref{lem:degenmaps-j-for-h}: 
\begin{align*}
    &
 \facemaps^{(j,n)} : \funsp(n) \to \funsp(n  + 1 ), \, \, 0\le  j \le n   , 
 \\
    &
  \degenmaps^{(i,n)}: \funsp(n)  \to  \funsp(n -1  ) \, \, 0\le  j \le n   , 
  \\
    &
  \degenmaps^{(i,n)}: \funsp(n)  \to  \funsp(n -1  ) .
\end{align*}
The new contributes of the paper begins with the observation that they almost
fulfil the generating relations of the cyclic category $\Delta C$
in  \cref{defn:DelCyc}.
To this end, one needs to add  one more set of  face maps 
\begin{align}
\label{eq:last-face-del-n+1}
\facemaps^{(  n+1 , n)}  \defeq \tau_{ (n + 1)}  \facemaps^{( 0 , n)}:
\funsp(n) &\to \funsp(n  + 1 )  \\
f\brac{ x_0 , \ldots , x_n } &\mapsto
f\brac{ [x_{ n+1 } , x_0] , x_1 , \ldots , x_n} .
\nonumber
\end{align}
Unlike the cyclic category $\Delta C$,  the morphism set of $ \mathcal C$
contains all partial derivatives, due to the failure  
of the compatibility between the simplicial and co-simplicial structures.
 This feature   has been predicted in, for instance,
the $a_4$-term computation \cite[\S~4]{2016arXiv161109815C} and 
deserves further investigation. 

Strictly speaking, (the objects of) $ \mathcal C$  depends
on  the choice of the underlying algebra $ \mathscr A$ and
the self-adjoint $h \in \mathscr A$.
Nevertheless, the relations presented in below are universal.
\begin{thm}
\label{thm:cyc-cato-sim-I}
With slight simplification on notations, such as
$\facemaps^{ (j, n)} \mapsto  \facemaps_j$, 
the generators of the category $ \mathcal C$ defined above
fulfil the simplicial and co-simplicial relations\textup{:}
\begin{align}
\label{eq:facj-faci-thm}
& \pmb\delta_j  \pmb\delta_i = \pmb\delta_i \pmb\delta_{ j-1}, \, \,
\text{ for $i<j$} \\
& \pmb\sigma_j \pmb\sigma_i = \pmb\sigma_i  \pmb\sigma_{ j+1} , \, \,
\text{ for $i \le j$}
\label{eq:degj-degi-thm}
\end{align}
with a  compatible  cyclic structure\textup :
\begin{align}
\label{eq:tau-vs-f-d-thm}
\pmb\tau_n  \pmb\sigma_{ i} = \pmb\sigma_{ i-1} \pmb\tau_{ n +1} , \, \,
\pmb\tau_n  \pmb \delta_i = \pmb \delta_{  i-1} \pmb\tau_{ n-1}
.
\end{align}
But, the compatibility among simplicial structures is modified in the
following way\textup{:}
\begin{align}
\label{eq:degj-faci-thm}
\pmb\sigma_j \pmb\delta_i =
\begin{cases}
\pmb\delta_i \pmb\sigma_{ j-1} & \text{ for $i<j-1$,} \\
\facemaps_i \degenmaps_i  -
\pmb\sigma_{ i} \pmb\delta_{ i+1}
& \text{ for $i = j - 1$, } \\
\partial_{ x_i} & \text{ for $ i = j$,} \\
\pmb\delta_{ i-1}  \pmb\sigma_{ j} & \text{ for $i > j + 1$ ,}
\end{cases}
\end{align}
where $\partial_{ x_i} $ is the partial derivative acting on functions
$f(x_0 , \ldots , x_n) \in \funsp (n)$.
\end{thm}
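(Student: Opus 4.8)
The plan is to read \cref{thm:cyc-cato-sim-I} as a consolidation of the relations proved one at a time throughout \cref{sec:vcal-reops}, rewritten in the abbreviated notation $\facemaps_j = \pmb\delta_j$, $\degenmaps_j = \pmb\sigma_j$ and $\cyc_{(n)} = \pmb\tau_n$. Under this identification almost every line is a verbatim restatement of an earlier result, so the bulk of the argument is bookkeeping, and only two entries of \eqref{eq:degj-faci-thm} carry genuinely new content. Concretely, the co-face relation \eqref{eq:facj-faci-thm} is precisely \cref{prop:spl-rlt-del-i-j-prf}; the two cyclic compatibilities in \eqref{eq:tau-vs-f-d-thm} are exactly \eqref{eq:cyclic-rel-sig-lem} and \eqref{eq:cyclic-rel-tau-lem} of \cref{prop:cyclic-vs-smpl-rel}; and within the mixed relation \eqref{eq:degj-faci-thm}, the two outer cases $i<j-1$ and $i>j+1$ are the two cases of \cref{prop:face-deg-cmpt}.

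Next I would dispatch the degeneracy relation \eqref{eq:degj-degi-thm}. \cref{prop:spl-rlt-sig-i-j-prf} gives it for $i<j$; to reach the full range $i\le j$ it remains only to settle the boundary case $i=j$, which I would obtain by rerunning the same argument, since the reduction there to the identity $\sigma_j\sigma_i = \sigma_i\sigma_{j+1}$ in the simplicial category $\Delta$ is valid for all $i\le j$, not merely for $i<j$. No new computation is required beyond tracking the subscripts of the repeated variable through the composition $\degenmaps_j\degenmaps_i$.

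The only two entries needing more than a pointer are the anomalous cases of \eqref{eq:degj-faci-thm}. For $i=j$ the claim $\degenmaps_i\facemaps_i = \partial_{x_i}$ is exactly \eqref{eq:face-deg-cmpt-i-i-I}, which follows from the confluent divided difference \eqref{eq:ddif-confluet}. For $i=j-1$, i.e. $j=i+1$, I would derive the stated formula by rearranging \eqref{eq:face-deg-cmpt-i-i-II}: that identity reads $\pmb\delta_i\pmb\sigma_i = \pmb\sigma_{i+1}\pmb\delta_i + \pmb\sigma_i\pmb\delta_{i+1}$, and solving for $\pmb\sigma_{i+1}\pmb\delta_i$ yields $\degenmaps_{i+1}\facemaps_i = \facemaps_i\degenmaps_i - \degenmaps_i\facemaps_{i+1}$, which is precisely the middle case of \eqref{eq:degj-faci-thm}.

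I expect the genuine obstacle to sit entirely in \eqref{eq:face-deg-cmpt-i-i-II}, the source of the $i=j-1$ case. This is where the would-be simplicial identity fails and a correction term appears, and the mechanism is the splitting of the divided difference of the diagonal restriction $z\mapsto f(z,z)$ into two one-sided divided differences, supplied by \eqref{eq:f-z-z-divided}. Everything else is transcription of identities already in hand; the conceptual weight lies in confirming that this single two-term splitting is the only place where differentiation and evaluation fail to commute, so that the cyclic and co-simplicial relations survive unmodified while precisely the two diagonal cases of $\degenmaps\facemaps$ deform.
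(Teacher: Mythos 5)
Your proposal takes the same route as the paper: the paper's own proof is precisely the assembly of \cref{prop:spl-rlt-del-i-j-prf}, \cref{prop:spl-rlt-sig-i-j-prf}, \cref{prop:face-deg-cmpt}, \cref{prop:face-deg-cmpt-i-i} and \cref{prop:cyclic-vs-smpl-rel} that you describe, and your two substantive steps are correct. In particular, (a) you rightly noticed that \cref{prop:spl-rlt-sig-i-j-prf} is stated only for $i<j$ while \eqref{eq:degj-degi-thm} asserts the range $i\le j$, and your fix is valid, since the proof there dualizes $\degenmaps_j\degenmaps_i$ to the composite $\sigma_j^{(n+1)}\sigma_i^{(n)}$ on variable subscripts and the identity $\sigma_j\sigma_i=\sigma_i\sigma_{j+1}$ holds in $\Delta$ for all $i\le j$; and (b) the $i=j-1$ entry of \eqref{eq:degj-faci-thm} is indeed just the transposition $\degenmaps_{i+1}\facemaps_i=\facemaps_i\degenmaps_i-\degenmaps_i\facemaps_{i+1}$ of \eqref{eq:face-deg-cmpt-i-i-II}, with \eqref{eq:f-z-z-divided} as the sole mechanism of the anomaly, exactly as in the paper.

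There is, however, one genuine omission. The generating set of $\mathcal C$ includes the extra top face maps $\facemaps^{(n+1,n)}\defeq\pmb\tau_{(n+1)}\facemaps^{(0,n)}$ of \eqref{eq:last-face-del-n+1}, and the instances of \eqref{eq:facj-faci-thm} and \eqref{eq:tau-vs-f-d-thm} in which this top index occurs are covered by none of the propositions you cite: those treat only $\facemaps^{(j,n)}$ with $0\le j\le n$, the face maps produced by variation. The paper's proof spends its one non-trivial sentence on exactly this point, observing that the definition is rigged, following \eqref{eq:simp+cyc-rels-V-2-1}, so that the relations extend automatically. The verification is short but must be said; for instance, for $1\le i\le n$,
\begin{align*}
\facemaps_{n+2}\,\facemaps_i
= \pmb\tau\,\facemaps_0\,\facemaps_i
= \pmb\tau\,\facemaps_{i+1}\,\facemaps_0
= \facemaps_i\,\pmb\tau\,\facemaps_0
= \facemaps_i\,\facemaps_{n+1},
\end{align*}
using \cref{prop:spl-rlt-del-i-j-prf} and \eqref{eq:cyclic-rel-tau-lem}, and similarly $\pmb\tau\,\facemaps_{n+1}=\pmb\tau^{2}\facemaps_0=\facemaps_n\,\pmb\tau$, which one can also confirm directly on arguments, both sides sending $f$ to $f\brac{[x_n,x_{n+1}],x_0,\ldots,x_{n-1}}$. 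Without these boundary checks your argument establishes the theorem only for the sub-generating set $\set{\facemaps^{(j,n)}}_{0\le j\le n}$, not for all generators of $\mathcal C$ as stated.
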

\begin{rem}
Note that \cref{eq:degj-faci-thm} implies that
\begin{align*}
\pmb\sigma_j \pmb\delta_i = \facemaps_{ i-1 }  \degenmaps_{ i-1 }   -
\pmb\sigma_{ i} \pmb\delta_{ i-1 } ,
\, \, \text{ for $i = j + 1$. }
\end{align*}
\end{rem}

\begin{proof}
The verification is spread out among \cref{prop:spl-rlt-del-i-j-prf},
\cref{prop:spl-rlt-sig-i-j-prf},
\cref{prop:face-deg-cmpt},
\cref{prop:face-deg-cmpt-i-i},
and \cref{prop:cyclic-vs-smpl-rel}.
Concerning  the new face maps in \cref{eq:last-face-del-n+1},
which  are not covered in the aforementioned propositions,
they are defined in such way 
(according to \cref{eq:simp+cyc-rels-V-2-1}) that
\cref{eq:degj-degi-thm} and \cref{eq:tau-vs-f-d-thm} extend automatically.
\end{proof}

Thanks to the self-duality of the cyclic category $\Delta C$,
one can also choose to work with the generators of the opposite category
$\Delta C^{\mathrm{op}}$.
Let us implement the duality functor from $\Delta C^{\mathrm{op}}$
to $\Delta C$ discussed at the end of \S~\ref{sec:cyc-DelC},
starting with the extra degeneracies (see \cref{eq:extr-deg-cyc})
$  \degenmaps_0^{(n+1)} \cyc_{ ( n+1)}: \funsp (n+1) \to \funsp (n)$:
\begin{align}
\label{eq:extr-deg-chk}
\degenmaps_0^{(n+1)} \cyc_{ ( n+1)} (f)
\brac{ x_0 , \ldots , x_n }  =
\cyc_{ ( n+1)} (f)
\brac{ x_0 ,  x_0 , \ldots , x_n }  =
f \brac{ x_0 , x_1 , \ldots , x_n , x_0  } ,
\end{align}
which full agrees with
$ \degenmaps_{ n+1} ^{(n+1)}$ defined \cref{eq:extr-deg-n-n}.
Following \cref{eq:dul-futr-d-sig,eq:dul-futr-s-del}, we define
\begin{align}
\label{eq:cycop-gen-de}
&
\decop^{(n)}_j \defeq \degenmaps^{(j,n)} :
\funsp(n) \to \funsp(n-1),
\, \,  j = 0, \ldots , n,
\\
&
\label{eq:cycop-gen-fac}
\incop^{(n)}_j \defeq \facemaps^{( j+1,n)}:
\funsp(n) \to \funsp(n + 1),
\, \,  j = 0, \ldots , n,
\\
&
\label{eq:cycop-gen-tau}
\cycop_{ (n)} \defeq \cyc_{ (n)}^{-1} :\funsp(n) \to \funsp(n) .
\end{align}

\begin{cor}
\label{cor:op-cyc-cate-relts}
With the new set of  generators
$\set{\decop^{(n)}_j , \incop^{(n)}_j , \cycop^{(n)}_j}_{ n=0 }^\infty $,
the category $ \mathcal C$ almost form a
$\Delta C^{\mathrm{op}}$-module \textup (cf. \cref{defn:DelCyc-OP}\textup{):} 
\begin{enumerate}[$ (1)$]
\item  Pre-simplicial structures\textup :
\begin{align*}
& \decop_i \decop_j = \decop_{ j-1} \decop_i  ,
\, \,  \, \,  \text{ for $i<j$,} \\
& \incop_i \incop_j = \incop_{ j+1 } \incop_i ,
\, \,  \, \,  \text{ for $i \le  j$.}
\end{align*}

\item  Modified compatibility between face and degeneracy maps\textup :
\begin{align*}
\decop_i \incop_j =
\begin{cases}
\incop_{ j-1 } \decop_i ,
&  i<j , \\
\incop_{ j-1}  \decop_j - \decop_{ j+1 } \incop_{ j-1} ,
&  i = j , \\
\partial_{ x_{ j+1}  }
&  i = j + 1  , \\
\incop_{ j}  \decop_{ j+1} - \decop_{ j+1} \incop_{ j+1 }
&  i = j + 2  , \\
\incop_{ j } \decop_{  i - 1 }  ,
& i > j+2 ,
\end{cases}
\end{align*}
where $\partial_{ x_i } $ is the partial derivative on the $i$-th argument.
\item Cyclic relations: $\cycop_{ (n)} ^{n+1} = \mathrm{id}_{ (n)} $ and
\begin{align*}
\decop_i \cycop_{ (n)}  = \cycop_{ (n-1) }  \decop_{ i-1}
,\, \, \, \,
\incop_i  \cycop_{ (n)}  =  \cycop_{ ( n+1 ) } \incop_{ i+1}  ,
\end{align*}
where $ 1\le  i \le  n$. It follows that
\begin{align*}
\decop_0 \cycop_{ (n)}  = \decop_n, \, \, \, \,
\incop_0 \cycop_{ (n)}  = \cycop_{ ( n+1 )}^2 \incop_n .
\end{align*}
\end{enumerate}
\end{cor}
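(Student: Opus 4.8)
The plan is to treat \cref{cor:op-cyc-cate-relts} as a mechanical transcription of \cref{thm:cyc-cato-sim-I} under the self-duality functor of $\Delta C$, so that no genuinely new computation is required. Concretely, I would substitute the definitions \eqref{eq:cycop-gen-de}, \eqref{eq:cycop-gen-fac} and \eqref{eq:cycop-gen-tau},
\[
\decop^{(n)}_j = \degenmaps^{(j,n)} = \pmb\sigma_j, \qquad \incop^{(n)}_j = \facemaps^{(j+1,n)} = \pmb\delta_{j+1}, \qquad \cycop_{(n)} = \cyc_{(n)}^{-1} = \pmb\tau_{(n)}^{-1},
\]
into each asserted identity and match the outcome against a relation already established in \cref{sec:vcal-reops}. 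The single structural feature to carry through every step is that $\incop$ comes with a $+1$ shift in the face index while $\decop$ leaves the degeneracy index untouched; this offset accounts for essentially all the index bookkeeping below.

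For the pre-simplicial relations in item $(1)$ I would simply relabel. The identity $\decop_i\decop_j = \decop_{j-1}\decop_i$ for $i<j$ is \eqref{eq:degj-degi-thm} read with $(i,j-1)$ in place of $(i,j)$, and $\incop_i\incop_j = \incop_{j+1}\incop_i$ for $i\le j$ becomes $\pmb\delta_{i+1}\pmb\delta_{j+1} = \pmb\delta_{j+2}\pmb\delta_{i+1}$, which is \eqref{eq:facj-faci-thm} with the shifted indices $(i+1,j+2)$. For the modified compatibility in item $(2)$ I would write $\decop_i\incop_j = \pmb\sigma_i\pmb\delta_{j+1}$ and match it to $\pmb\sigma_J\pmb\delta_I$ of \eqref{eq:degj-faci-thm} with $J=i$, $I=j+1$, reading off the branch from the position of $I$ relative to $J$. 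The five cases $i<j$, $i=j$, $i=j+1$, $i=j+2$, $i>j+2$ of the corollary fall, respectively, into the branches $I>J+1$, $I=J+1$ (the Remark following \cref{thm:cyc-cato-sim-I}), $I=J$, $I=J-1$, $I<J-1$ of the theorem. In particular the partial derivative $\partial_{x_{j+1}}$ is the image of the confluent branch $\pmb\sigma_j\pmb\delta_i=\partial_{x_i}$, while the two anomalous combinations $\incop_{j-1}\decop_j-\decop_{j+1}\incop_{j-1}$ and $\incop_j\decop_{j+1}-\decop_{j+1}\incop_{j+1}$ arise by rewriting $\facemaps_k\degenmaps_k$ and $\pmb\sigma_k\pmb\delta_{k\pm1}$ in the new generators, where each $\facemaps_k$ contributes an $\incop_{k-1}$.

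The cyclic relations in item $(3)$ are where the inversion $\cycop=\cyc^{-1}$ does the work. First, $\cycop_{(n)}^{n+1}=\mathrm{id}$ is immediate from the explicit cyclic reparametrization \eqref{eq:tau-n-defn}. The two commutation relations are obtained by inverting \eqref{eq:cyclic-rel-sig-lem} and \eqref{eq:cyclic-rel-tau-lem} (equivalently \eqref{eq:tau-vs-f-d-thm}): since $\cyc$ appears on both sides of those identities, multiplying through by the appropriate inverses moves $\cycop$ across $\degenmaps$ and $\facemaps$ and produces the asserted shifts, after which $\incop_j=\facemaps_{j+1}$ is substituted. Finally, the two ``wrap-around'' consequences are not instances of the generic relations: $\decop_0\cycop_{(n)}=\decop_n$ is exactly the extra-degeneracy identity \eqref{eq:extr-deg-chk} rewritten in the new generators, and $\incop_0\cycop_{(n)}=\cycop_{(n+1)}^2\incop_n$ follows from the definition \eqref{eq:last-face-del-n+1} of $\facemaps^{(n+1,n)}$ together with $\cycop_{(n)}^{n+1}=\mathrm{id}$.

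The main obstacle, and the only place where care beyond relabeling is needed, is fixing the \emph{direction} of the index shift in item $(3)$. Because inversion reverses the order of a composite and the substitution $\incop_j=\facemaps_{j+1}$ superimposes a further shift, pattern-matching against the shapes of \eqref{eq:tau-vs-f-d-thm} is unreliable; I would instead re-derive each cyclic relation directly from the reparametrization formulas \eqref{eq:tau-n-defn}, \eqref{eq:face-j-for-h} and \eqref{eq:degenmaps-j-for-h}, evaluating both sides on a generic $f(x_0,\ldots,x_n)$ and comparing the permuted arguments slot by slot. The same slot-by-slot evaluation is what cleanly settles the two wrap-around cases, since there the bracketed divided-difference argument produced by $\facemaps^{(n+1,n)}$ must be matched against the double cyclic shift $\cycop_{(n+1)}^2$.
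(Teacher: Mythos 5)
Your route is the paper's own: the corollary carries no separate proof precisely because it is intended as the transcription of \cref{thm:cyc-cato-sim-I} through the duality substitutions \eqref{eq:cycop-gen-de}--\eqref{eq:cycop-gen-tau}, and your bookkeeping for items $(1)$ and $(2)$ is exactly right. In particular, the matching of the five cases $i<j$, $i=j$, $i=j+1$, $i=j+2$, $i>j+2$ to the branches $I>J+1$, $I=J+1$ (the Remark), $I=J$, $I=J-1$, $I<J-1$ of \eqref{eq:degj-faci-thm}, with $\pmb\sigma_J\pmb\delta_I=\pmb\sigma_i\pmb\delta_{j+1}$, checks out, as does the rule $\facemaps_k=\incop_{k-1}$ producing the two anomalous combinations.

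There is, however, one substantive point in item $(3)$ that your phrase ``produces the asserted shifts'' papers over: the derivation you describe does \emph{not} yield the printed relation $\incop_i\cycop_{(n)}=\cycop_{(n+1)}\incop_{i+1}$. Inverting $\pmb\tau_{(n)}\pmb\delta_i=\pmb\delta_{i-1}\pmb\tau_{(n-1)}$ from \eqref{eq:tau-vs-f-d-thm} gives $\pmb\delta_i\,\pmb\tau^{-1}=\pmb\tau^{-1}\pmb\delta_{i-1}$, so after substituting $\incop_j=\facemaps^{(j+1)}$ one obtains
\begin{align*}
\incop_i\,\cycop_{(n)} \;=\; \cycop_{(n+1)}\,\incop_{i-1},
\qquad 1\le i\le n,
\end{align*}
and the slot-by-slot evaluation you rightly insist on confirms this and not the printed version: for $n=1$, $i=1$, both $\incop_1\cycop_{(1)}$ and $\cycop_{(2)}\incop_0$ send $f$ to $f\brac{x_1,[x_2,x_0]}$, whereas $\cycop_{(2)}\incop_2$ is not even defined since $\incop^{(n)}_{n+1}$ is out of range (already at $i=n$ the printed formula calls a nonexistent generator). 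The corrected shift is also the one demanded by \eqref{eq:coCyc-sitn}, $s_it_{(n)}=t_{(n+1)}s_{i-1}$, together with the duality assignment \eqref{eq:dul-futr-s-del}, which the corollary is explicitly mirroring. So the printed $\incop_{i+1}$ is a misprint for $\incop_{i-1}$; your proposed method is sound and identical to the paper's implicit proof, but as written your proposal endorses the misprint instead of detecting it --- the direct evaluation you advocate is precisely the step that catches it, and you should carry it out rather than defer it. The remaining assertions of item $(3)$ are fine: $\cycop_{(n)}^{n+1}=\mathrm{id}$ is immediate from \eqref{eq:tau-n-defn}, the $\decop$ relation inverts \eqref{eq:cyclic-rel-sig-lem} cleanly, and the two wrap-around identities follow from \eqref{eq:extr-deg-chk} and \eqref{eq:last-face-del-n+1} as you say (with the caveat that \eqref{eq:extr-deg-chk} itself tacitly uses $\cyc^{-1}$, consistently with \eqref{eq:extr-deg-cyc}).
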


\section{Comparison with Hopf Cyclic Theory}
\label{sec:cmp-Hcyc}

Let us keep the notations introduced at the end of \cref{sec:rearr-op-fun}
and denote $ \mathcal O = C_c^\infty(U_h)$.
We will, in this section, replace the objects $ \funsp(n,h)$ of $ \mathcal C$
by the algebraic tensor product $ \mathcal O^{\otimes (n+1)}$ and then 
reproduce the simplicial and cyclic relations stated in 
\cref{thm:cyc-cato-sim-I}.
It turns out that the generators of $ \mathcal C$, when applied to functions of
separating variables: 
$f (x_0 ,  \ldots , x_{ n+1} ) = f_0 (x_0) \cdots f_{ n+1}  (x_{ n+1} )
\in  \mathcal O^{\otimes (n+1)}$,  
can be rewritten in  a more acquainted form which 
is in reminiscent of the counterpart in Hopf cyclic theory.   
Let us begin with the degeneracy and the
cyclic maps in \eqref{eq:degenmaps-j-for-h} and \eqref{eq:tau-n-defn}.
It is not difficult to see that they become
\begin{align}
\label{eq:deg_n-sp-var}
\degenmaps_j^{(n+1)}:
\mathcal O^{ \otimes (n+1)} &\to \mathcal O^{\otimes  n}   , \, \, \, \,
f_0 \otimes \cdots \otimes  f_{n+1}
\mapsto
f_0 \otimes \cdots \otimes  f_j f_{ j+1} \otimes \cdots \otimes  f_{ n+1 } ,
\end{align}
where $j=0, \ldots ,n$ and
\begin{align}
\label{eq:cyc_n-sp-var}
\cyc_{ (n)} :
\mathcal O^{ \otimes (n)} &\to \mathcal O^{\otimes  n}   , \, \, \, \,
f_0 \otimes \cdots \otimes  f_{n}
\mapsto
f_{ n} \otimes  f_0 \otimes \cdots  \otimes  f_{ n-1}     .
\end{align}
which are part of the generators of the cyclic module  of the
algebra $ \mathcal O$, cf. \cite[\S1.1,\S2.1]{Loday:1992vm}.
In particular, the relations in
\eqref{eq:spl-rlt-sig-i-j-prf} and \eqref{eq:cyclic-rel-sig-lem} follows
immediately from classical results.

Next, let us reexamine the relations in \eqref{eq:spl-rlt-del-i-j-prf}
and \eqref{eq:cyclic-rel-tau-lem}.
They rely only  on the following algebraic aspect of
the divided difference $f(x) \mapsto \ddif (f) (x_0 , x_1)$, namely,
as a coproduct map of the algebra $ \mathcal O$:
$\ddif : \mathcal O \to \mathcal O \hotimes \mathcal O$.
Note that the completion
of the algebraic tensor product is inevitable for the function $ (x_0
- x_1)^{-1}$ is not of separating variables. Nevertheless, let us still
make use of the Sweedler notation
\begin{align}
\label{eq:divdif-coprd-notn}
\ddif (f) = (f)_{ (1)} \hotimes (f)_{ (2)}  ,
\end{align}
where the complete tensor product $ \hotimes $ emphasize
that the underlying summation  might be an infinite one.
The composition rule \eqref{eq:Composition-rule} and the fact
that $\ddif (f)$ is symmetric in all the arguments leads to
the coassociativity $  \brac{ \ddif \otimes 1 }\ddif (f) =
\brac{ 1 \otimes  \ddif  }\ddif (f)$, that is
\begin{align}
\label{eq:divdif-swdl}
((f)_{ (1)})_{ (1)}  \hotimes   ((f)_{ (1)})_{ (2)}
\hotimes  (f)_{ (2)} =
(f)_{ (1)}  \hotimes
((f)_{ (2)})_{ (1)}  \hotimes   ((f)_{ (2)})_{ (2)}  .
\end{align}
Now the face operators can be written as
\begin{align}
\label{eq:fac_n-sp-var}
\facemaps^{(j,n)} = 1 \otimes \cdots \otimes  \ddif \otimes  \cdots \otimes 1:
\mathcal O^{ \hotimes  (n+1)}  & \to \mathcal O^{ \hotimes  (n+2)}  ,
\end{align}
where $\ddif$ appears at the  $j$-th factor, that is,
\begin{align}
\label{eq:fac_n-sp-var-otimes-fi}
f_0 \hotimes \cdots \hotimes f_n
\xrightarrow{ \facemaps^{(j,n)}}
f_0 \hotimes \cdots \hotimes  \ddif (f_j) \hotimes  \cdots \hotimes f_n  =
f_0 \hotimes \cdots \hotimes  (f_j)_{ (1)}  \hotimes (f_j)_{ (2)}
\hotimes \cdots \hotimes  f_n .
\end{align}
The action of the cyclic operators $\cyc$ in \eqref{eq:cyc_n-sp-var}
and face operators
resemble their counterparts  $\tau$ and $\delta_j$ described in
Hopf cyclic cohomology theory \cite[\S~7]{Connes:1998tp}.
To give another proof of the relations in 
\eqref{eq:spl-rlt-del-i-j-prf}  and \eqref{eq:cyclic-rel-tau-lem}, 
we point out that their the Hopf cyclic counterparts only require the
coassociativity of the underlying comultiplication,  and thus the arguments
there can applied without much modification.

An obvious reason for  not having all the relations in the cyclic
category $\Delta C$ is the fact that the coproduct $\ddif$ is not compatible
\footnote{The compatibility means the coproduct $\triangle$ is an
algebra homomorphism, that is
$ \triangle (f_1 f_2) = \triangle (f_1) \triangle ( f_2 )$. }
with the pointwise multiplication of $ \mathcal O$.
\begin{prop}
    With regards to the new definitions of the face
    and the degeneracy maps given in \eqref{eq:fac_n-sp-var-otimes-fi}
    and \eqref{eq:deg_n-sp-var}, the relations stated in 
    \textup{  \cref{prop:face-deg-cmpt}}   and 
    \textup{  \cref{prop:face-deg-cmpt-i-i} }
    still hold true.
\end{prop}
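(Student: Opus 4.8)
The plan is to translate everything through the dictionary supplied by \eqref{eq:deg_n-sp-var} and \eqref{eq:fac_n-sp-var-otimes-fi}: on separating variables the degeneracy $\pmb\sigma_j$ is the pointwise multiplication of the adjacent factors $f_j f_{j+1}$, while the face map $\pmb\delta_j$ applies the coproduct $\ddif$ to the $j$-th factor, $f_j \mapsto (f_j)_{(1)} \hotimes (f_j)_{(2)}$ in the Sweedler notation of \eqref{eq:divdif-coprd-notn}. With this reading, the two cases of \cref{prop:face-deg-cmpt} recorded in \eqref{eq:face-deg-cmpt} are assertions about operations acting on \emph{disjoint} tensor slots. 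First I would observe that for $i < j-1$ (resp. $i > j+1$) the comultiplied slot $i$ and the multiplied pair $\{j, j+1\}$ never overlap, so the two transformations act on disjoint factors of the relevant tensor power of $\mathcal O$ and commute; the only effect to track is the reindexing caused by the extra slot that $\pmb\delta_i$ inserts, which turns $\pmb\sigma_j$ into $\pmb\sigma_{j-1}$ (resp. turns $\pmb\delta_i$ into $\pmb\delta_{i-1}$), exactly as in the original proof. This is routine functoriality and uses no property of $\ddif$ beyond its landing in two adjacent slots.

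The relations in \cref{prop:face-deg-cmpt-i-i} are precisely those in which the coproduct and the product meet on overlapping slots, and this is where the genuine input lies. For \eqref{eq:face-deg-cmpt-i-i-I} I would note that $\pmb\sigma^{(i,n+1)} \pmb\delta^{(i,n)}$ first comultiplies the $i$-th factor and then multiplies the resulting adjacent pair, so on separating variables it sends $f_i \mapsto (f_i)_{(1)} (f_i)_{(2)}$. As a function of a single variable this product is $\ddif(f_i)$ restricted to the diagonal, which by the confluent formula \eqref{eq:ddif-confluet} (with $\alpha = 1$) equals $\partial_x f_i$. Hence the composite is the partial derivative $\partial_{x_i}$, as required.

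The remaining identity \eqref{eq:face-deg-cmpt-i-i-II} is the one exhibiting the failure of $\ddif$ to be an algebra homomorphism, and I expect the Leibniz rule to do all the work. Here $\pmb\delta^{(i,n+1)} \pmb\sigma^{(i,n)}$ first forms the product $f_i f_{i+1}$ and then comultiplies it, yielding $\ddif(f_i f_{i+1})$ in slots $i, i+1$. Expanding by \eqref{eq:Leibniz-rule} produces the two summands $(f_i (f_{i+1})_{(1)}) \hotimes (f_{i+1})_{(2)}$ and $(f_i)_{(1)} \hotimes ((f_i)_{(2)} f_{i+1})$. I would then match these term by term to the right-hand side: comultiplying slot $i+1$ and then multiplying slots $i, i+1$, i.e. $\pmb\sigma^{(i,n+1)} \pmb\delta^{(i+1,n)}$, reproduces the first summand, while comultiplying slot $i$ and then multiplying slots $i+1, i+2$, i.e. $\pmb\sigma^{(i+1,n+1)} \pmb\delta^{(i,n)}$, reproduces the second, once the index shifts are accounted for. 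The main obstacle here is purely bookkeeping: keeping the slot indices straight through the insert-then-multiply compositions so that each Leibniz term lands on the correct summand. No deeper difficulty enters, and in particular coassociativity \eqref{eq:divdif-swdl} is not needed for either relation of \cref{prop:face-deg-cmpt-i-i}; only the Leibniz and confluent identities are used, and these encode exactly the incompatibility between the algebra and coalgebra structures on $\mathcal O$ noted in the footnote preceding the statement.
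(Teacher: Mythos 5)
Your proposal is correct and takes essentially the same route as the paper's proof: the disjoint-slot commutation argument (with the reindexing bookkeeping) for the two cases of \cref{prop:face-deg-cmpt}, the confluent identity \eqref{eq:ddif-confluet} for \eqref{eq:face-deg-cmpt-i-i-I}, and the Sweedler-form Leibniz rule derived from \eqref{eq:Leibniz-rule} for \eqref{eq:face-deg-cmpt-i-i-II}. Your term-by-term matching of the two Leibniz summands to $\pmb\sigma^{(i,n+1)}\pmb\delta^{(i+1,n)}$ and $\pmb\sigma^{(i+1,n+1)}\pmb\delta^{(i,n)}$ coincides with the paper's computation, including the (correct) observation that coassociativity \eqref{eq:divdif-swdl} is not needed here.
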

\begin{proof}
    One can see clearly in the proof of \cref{prop:face-deg-cmpt} that 
the multiplication
$f_i \otimes f_j \mapsto  f_i f_j$ and the divided difference
playing the role of coproduct do not interact with each other. 
Therefore the relations \eqref{eq:face-deg-cmpt} belong to the cyclic theory,
and thus as before, arguments in Hopf cyclic theory shall work without much
modification. We leave the details to the reader. 

Let us check \cref{prop:face-deg-cmpt-i-i} using the new notations.
Start with \eqref{eq:face-deg-cmpt-i-i-I}, the left hand side 
$ \facemaps^{(i , n+1 )}\degenmaps^{(i,n)}$ can be computed as follows:
\begin{align*}
&  f_0 \hotimes \cdots  \hotimes f_n
\xrightarrow{ \facemaps^{(i,n)}}
f_0 \hotimes \cdots \hotimes
\brac{ f_i }_{ (1)} \hotimes    \brac{ f_i }_{ ( 2)}
\hotimes \cdots \hotimes  f_n \\
\xrightarrow{ \degenmaps^{( i,n)} } &
f_0 \hotimes \cdots \hotimes
\brac{ f_i }_{ (1)} \brac{ f_i }_{ ( 2)}  \hotimes f_{ i+1}
\hotimes \cdots \hotimes  f_n, 
\end{align*}
which indeed agrees with the right hand side as 
$ \brac{ f_i }_{ (1)} \brac{ f_i }_{ ( 2)} = \partial_{ x_i }(f_i)$ according to 
\eqref{eq:ddif-confluet}.

To see the last one \eqref{eq:face-deg-cmpt-i-i-II}, we first have to convert
\eqref{eq:f-z-z-divided} by assuming $f (x, y) = f_1 ( x ) f_2 (y)$ and then 
apply the Leibniz rule in \eqref{eq:Leibniz-rule}, the new form in terms of the 
Sweedler notation looks like:
\begin{align*}
\ddif ( f_1 f_2 ) =
(f_1)_{ (1)} \hotimes (f_1)_{ (2)} f_2  +
f_1 (f_2)_{ (1)} \hotimes (f_2)_{ (2)} .
\end{align*}
Now we are ready to check \eqref{eq:face-deg-cmpt-i-i-II}. 
The two term on the right hand side are given by
\begin{align*}
&  f_0 \hotimes \cdots  \hotimes f_n
\xrightarrow{ \facemaps^{(i,n)}}
f_0 \hotimes \cdots \hotimes
\brac{ f_i }_{ (1)} \hotimes    \brac{ f_i }_{ ( 2)}
\hotimes \cdots \hotimes  f_n \\
\xrightarrow{ \degenmaps^{( i+1,n)} } &
f_0 \hotimes \cdots \hotimes
\brac{ f_i }_{ (1)} \hotimes    \brac{ f_i }_{ ( 2)}   f_{ i+1}
\hotimes \cdots \hotimes  f_n
\end{align*}
and similarly
\begin{align*}
&  f_0 \hotimes \cdots  \hotimes f_n
\xrightarrow{ \degenmaps^{(j,n)} \facemaps_{ i+1} ^{(n)}}
f_0 \hotimes \cdots \hotimes
f_{ i} \brac{ f_{ i+1}  }_{ (1)} \hotimes    \brac{ f_{ i+1} }_{ ( 2)}
\hotimes \cdots \hotimes  f_n .
\end{align*}
While, for the left hand side, we have
\begin{align*}
&  f_0 \hotimes \cdots  \hotimes f_n
\xrightarrow{ \facemaps^{(i,n)} \degenmaps^{(i,n)}}
f_0 \hotimes \cdots \hotimes
\ddif  (f_i f_{ i+1} ) \hotimes \cdots \hotimes  f_n
\\
= &
f_0 \hotimes \cdots \hotimes
(f_i)_{ (1)} \hotimes  (f_i)_{ (2)} f_{ i+1} \hotimes  \cdots
\hotimes f_n \\
+ &
f_0 \hotimes \cdots \hotimes
f_{ i} \brac{ f_{ i+1}  }_{ (1)} \hotimes    \brac{ f_{ i+1} }_{ ( 2)}
\hotimes  \cdots \hotimes f_n ,
\end{align*}
which is exactly the sum of the two terms obtained before.
\end{proof}

\section{Application to Modular Geometry on Noncommutative (Two) Tori}
\label{sec:appl-modgeo} 
The generators of $ \mathcal C$
are grown out of the explicit computations carried out
in the author's previous works \cite{Liu:2018ab,Liu:2018aa,Liu2020General-Rearran}, 
aiming at improving two analytic backbones behind  the spectral geometry:
the pseudo-differential and the variational calculus.

\subsection{Connes-Moscovici Type Functional Relations}
\label{sec:CM-relt}

Let $ \mathscr A = C^\infty(\mathbb{T}^2_\theta)$ be the smooth noncommutative
two torus generated by two unitary elements $U$ and  $V$ with the relation
$UV = e^{i \theta} V U$, for some irrational $\theta \in \R \setminus \Q$.
The algebra $ \mathscr A$ is $\Z^2$-graded and the components are
all one-dimensional spanned by $ U^n V^m \in \brac{ \mathscr A }_{n,m}$,
$(n,m) \in \Z^2$.
More precisely, $ \mathscr A$ consists of series
$ \sum_{ (n,m) \in \Z^2 } a_{ (n,m)} U^n V^m $ whose coefficients
$ a_{ n,m} \in \mathbb{C} $ are of rapidly decay in $(n,m)$.
Denote by $\nabla_1$ and $\nabla_2$ the two basic derivations
\footnote{The classical notation for the basis derivations $\delta_1$ and $\delta_2$
has been used  in the definition of the cyclic category $\Delta C$ in
\cref{sec:cyc-DelC}.  }
on $ \mathscr A$ obtained by differentiating the
torus $\mathbb{T}^2$  associated with the $\Z^2$-grading.
Another basic ingredient of the differential calculus is the canonical trace
\begin{align*}
\varphi_0 : \mathscr A \to \mathbb{C} :
\sum_{ (n,m) \in \Z^2 } a_{ (n,m)} U^n V^m  \mapsto  a_{ (0,0)}
\end{align*}
that simply takes the coefficient of the constant term.
We shall consider variational problems on the space of self-adjoint elements
$ \mathfrak S = \mathfrak S ( \mathscr A)$
viewed as the tangent space of a conformal class
of metrics on $ \mathscr A$, cf. \cite[\S4.2]{MR3194491}.
For a fixed self-adjoint $h$, denote by $\bar \delta_a$
the variation of $h$ along some other self-adjoint $a = a^* \in \mathscr A$:
\begin{align*}
h \mapsto h + \varepsilon a, \, \,  \, \,
\bar \delta_a \defeq
\frac{d}{d \varepsilon} \Big |_{ \varepsilon =0} .
\end{align*}
Let $F(h)$ be a functional in  $h$,  i.e., a function
$F : \mathfrak S  \to \mathbb{C}$ on  $ \mathfrak S$.
The gradient of $F$ at $h$ (associated with $\varphi_0$),
$\grad_h F \in \mathscr A$
is defined to be the unique element such that
\begin{align}
\label{eq:gradF-defn}
\bar \delta_a F(h) = \varphi_0 \brac{  \grad_h F  a}, \, \, \, \,
\forall a \in  \mathfrak S ( \mathscr A).
\end{align}

The following first variation formula was first proved in 
\cite[Thm 4.10]{MR3194491}.
The underlying co-simplicial and cyclic structure of $\mathcal C$ has already 
been revealed in the version of \cite[Thm 2.15, 2.17]{Liu:2018ab}. The new
observation from \eqref{eq:CMtp-II} is role of $\facemaps_2$ in cyclic theory,
cf. \eqref{eq:last-face-del-n+1}. 
\begin{prop} \label{prop:CM-fun-relt-C}
Consider the following functional 
\begin{align*}
F(h) =
\varphi_0 \brac{
\mathscr S_h ( T )
(\nabla h) \cdot \nabla h},
\end{align*}
for some $T \in \funsp (1)$ and $\nabla = \nabla_j$, $j=1$ or  $2$,
is one of the basic derivations.
The gradient is of the general form of a second order
differential expression purely in terms of $h$:
\begin{align*}
\grad_h F =
\mathscr S_h ( K )
\brac{ \nabla^2 h } +
\mathscr S_h ( H )
\brac{ \nabla h \otimes  \nabla h } ,
\end{align*}
where the coefficients $K \in \funsp(1)$ and $H \in \funsp(2)$ satisfy
\begin{align}
    \label{eq:CMtp-I} 
K &=  - \brac{ 1 + \pmb\tau_{ (1)}  } ( T) , \\
H &= \brac{  \pmb\delta_0 + \pmb\delta_1 - \pmb\delta_2} (K) ,
    \label{eq:CMtp-II} 
\end{align}
where $ \facemaps_j = \facemaps^{(j,1)}$, $j=0,1,2$ and recall that
$\facemaps_2 = \pmb\tau_{ (2)}  \pmb\delta_0$ is the one 
in \eqref{eq:last-face-del-n+1}.
\end{prop}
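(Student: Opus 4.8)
The plan is to compute the first variation $\bar\delta_a F(h)$ explicitly and then read off $\grad_h F$ from the defining relation \eqref{eq:gradF-defn}, i.e.\ by rewriting the answer in the canonical form $\varphi_0(\,\cdot\,a)$. Since $F$ involves $h$ in three places — once inside the functional calculus $\mathscr S_h(T)$ and once in each factor $\nabla h$ — the Leibniz rule for $\bar\delta_a$ (using $\bar\delta_a\nabla h=\nabla a$) produces
\begin{align*}
\bar\delta_a F(h)
= \varphi_0\brac{ \bar\delta_a\brac{\mathscr S_h(T)}(\nabla h)\cdot \nabla h }
+ \varphi_0\brac{ \mathscr S_h(T)(\nabla a)\cdot \nabla h }
+ \varphi_0\brac{ \mathscr S_h(T)(\nabla h)\cdot \nabla a }.
\end{align*}
The first term I would expand by the variation formula \eqref{eq:nabla-S-f}, applied to the variation $\bar\delta_a$ so that the inserted factor is $a$ instead of $\nabla h$; this splits it into a $\facemaps^{(0,1)}$-contribution (with $a$ in the left slot) and a $\facemaps^{(1,1)}$-contribution (with $a$ in the right slot). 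The two remaining terms carry $\nabla a$ and must first be integrated by parts.

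For the integration by parts I would use the trace invariance $\varphi_0\circ\nabla=0$ of the basic derivations, so that $\varphi_0\brac{\nabla(XY)}=0$ transfers $\nabla$ off the factor $a$. Applying this to the second and third terms and re-expanding each $\nabla\brac{\mathscr S_h(T)(\,\cdot\,)}$ once more via \eqref{eq:nabla-S-f} produces, among others, exactly the two terms carrying a second derivative, namely $-\varphi_0\brac{\mathscr S_h(T)(a)\cdot\nabla^2 h}$ and $-\varphi_0\brac{\mathscr S_h(T)(\nabla^2 h)\cdot a}$. The second is already in canonical form and contributes $-T$ to the coefficient of $\nabla^2 h$; the first becomes $-\varphi_0\brac{\mathscr S_h(\pmb\tau_{(1)}T)(\nabla^2 h)\cdot a}$ after one cyclic permutation (\cref{prop:tau-n-defn}), contributing $-\pmb\tau_{(1)}T$. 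Together these give $K=-(1+\pmb\tau_{(1)})(T)$, which is \eqref{eq:CMtp-I}.

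The remaining six terms all have the structure $\nabla h\otimes\nabla h$ with a single $a$ occupying one of the three positions. The plan is to cyclically permute each via \cref{prop:tau-n-defn} so that $a$ sits at the far right and the argument becomes $\nabla h\otimes\nabla h$, after which the coefficient is immediate. Here the relations \eqref{eq:cyclic-rel-tau-lem}, namely $\pmb\tau_{(2)}\facemaps^{(i,1)}=\facemaps^{(i-1,1)}\pmb\tau_{(1)}$ for $i=1,2$, together with the defining identity $\facemaps_2=\pmb\tau_{(2)}\facemaps^{(0,1)}$ of \eqref{eq:last-face-del-n+1}, turn each permuted coefficient into one of $\pmb\delta_0,\pmb\delta_1,\pmb\delta_2$ applied to $T$ or to $\pmb\tau_{(1)}T$. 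Collecting them, the two contributions from the first variation term assemble into $+\pmb\delta_2(1+\pmb\tau_{(1)})T$ (the left-slot terms being precisely those demanding the extra face map $\facemaps_2$), while the four contributions from the integrated-by-parts terms assemble into $-(\pmb\delta_0+\pmb\delta_1)(1+\pmb\tau_{(1)})T$. Hence $H=(\pmb\delta_2-\pmb\delta_1-\pmb\delta_0)(1+\pmb\tau_{(1)})T$, and substituting $(1+\pmb\tau_{(1)})T=-K$ yields $H=(\pmb\delta_0+\pmb\delta_1-\pmb\delta_2)(K)$, which is \eqref{eq:CMtp-II}.

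The main obstacle is organizational rather than conceptual: one must track precisely the slot into which $a$ (or $\nabla^2 h$) is inserted after each Leibniz expansion and each integration by parts, and the number of cyclic shifts $\pmb\tau_{(2)}$ needed to restore canonical form, so that no sign or index is lost. The decisive point — and the origin of the apparently asymmetric $-\pmb\delta_2$ in \eqref{eq:CMtp-II} — is the recognition that the terms with $a$ in the leftmost slot require the extra face map $\facemaps_2=\pmb\tau_{(2)}\facemaps^{(0,1)}$, so that this term emerges naturally from the cyclic symmetry of the trace rather than from the variational calculus alone.
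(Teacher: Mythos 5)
Your proposal is correct and follows essentially the same route as the paper's proof: the same Leibniz split of $\bar\delta_a F$ into the variation of the rearrangement operator and of the two $\nabla h$ factors, the same expansion \eqref{eq:nabla-S-f}, integration by parts under $\varphi_0$, and cyclic normalization via \cref{prop:tau-n-defn} together with $\facemaps_2 = \pmb\tau_{(2)}\facemaps^{(0,1)}$ and \eqref{eq:cyclic-rel-tau-lem}. The only difference is bookkeeping order — the paper first merges the two $\nabla a$ terms into $(1+\pmb\tau_{(1)})(T)$ and integrates by parts once, whereas you integrate by parts each term separately and permute the six resulting terms afterwards — and your assembled coefficients $K = -(1+\pmb\tau_{(1)})(T)$ and $H = (\pmb\delta_0+\pmb\delta_1-\pmb\delta_2)(K)$ agree with the paper's.
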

\begin{proof}
By the Leibniz rule, $ \bar \delta_a F(h) = P_{ \mathrm I} + P_{ \mathrm{II}}$
consists of two parts. First, the variation on $\nabla h$.
Note that two types of differentials commute:
$ \bar \delta_a \brac{ \nabla h } = \nabla \brac{ \bar \delta_a h } = \nabla a$,
thus
\begin{align*}
P_{ \mathrm I}
&=
\varphi_0 \brac{
\mathscr S_h (T)
(\nabla a) \cdot \nabla h } +
\varphi_0 \brac{
\mathscr S_h (T)
(\nabla h) \cdot \nabla a } \\
&=
\varphi_0 \brac{
\mathscr S_h \brac{ \brac{ 1 + \pmb\tau_{ (1)}  } (T) }
(\nabla h) \cdot \nabla a } ,
\end{align*}
where we have used \cref{prop:tau-n-defn}.
We continue by applying   integration by parts:
\begin{align*}
P_{ \mathrm I} =  - \varphi_0 \brac{
\nabla \sbrac{
\mathscr S_h \brac{
\brac{ 1 + \pmb\tau_{ (1)}  } (T)
}
(\nabla h) } \cdot a  }.
\end{align*}
Let $K = - \brac{ 1 + \pmb\tau_{ (1)}  } (T)$,
\cref{prop:face-j-for-h} yields
\begin{align*}
- \nabla \sbrac{
\mathscr S_h \brac{  \brac{ 1 + \pmb\tau_1 } (T)}
(\nabla h) }
= \mathscr S_h \brac{
(\pmb\delta^{(0,1)} + \pmb\delta^{(1,1)}) (K)
}
(\nabla h \otimes  \nabla h)
+
\mathscr S_h \brac{ K }
(\nabla^2 h) .
\end{align*}
Finally, $P_{ \mathrm I} $ has been turned into the form of
the right hand side of \eqref{eq:gradF-defn}:
\begin{align}
\label{eq:PI-done}
P_{ \mathrm I}  &=
\varphi_0 \brac{
\mathscr S_h \brac{ (\pmb\delta^{(0,1)} + \pmb\delta^{(1,1)} ) (K)   }
(\nabla h \otimes  \nabla h)  \cdot  a}
+
\varphi_0 \brac{
\mathscr S_h ( K )
(\nabla^2 h) \cdot a
}
\end{align}
For $  P_{ \mathrm{II} }$ concerning only the variation on  the rearrangement
operator
$ \mathscr S_h (T)$,
\begin{align*}
P_{ \mathrm{II} }
&=
\varphi_0\brac{
\bar \delta_a \sbrac{
\mathscr S_h (T)
}
(\nabla h) \cdot \nabla h} ,
\end{align*}
we need again \cref{prop:face-j-for-h}:
\begin{align*}
\bar \delta_a \sbrac{
\mathscr S_h (T)
} (\nabla h)
=
\mathscr S_h \brac{  \facemaps^{(0,1)} (T) }
(a \otimes  \nabla h)
+
\mathscr S_h \brac{  \facemaps^{(1,1)} (T) }
(\nabla h \otimes a ) ,
\end{align*}
and then move $a$ to the very right (as required in  \eqref{eq:gradF-defn})
using the cyclic operator  in \cref{prop:tau-n-defn}:
\begin{align*}
P_{ \mathrm{II} }
&=
\varphi_0 \brac{
\brac{
\mathscr S_h \brac{
\pmb\tau_{ (2)}  \facemaps^{(0,1)} (T) +
\pmb\tau_{ (2)} ^2 \facemaps^{(1,1)} (T) }
}
( \nabla h \otimes  \nabla h) \cdot a
}.
\end{align*}
The first term above is the
last face operator $\facemaps^{(2,1)} =  \pmb\tau_{ (2)}  \facemaps^{(0,1)} $
(see \eqref{eq:last-face-del-n+1})
and second term can also be modified using \eqref{eq:tau-vs-f-d-thm}:
\begin{align*}
\pmb\tau_{ (2)}^2 \facemaps^{(1,1)} =
\pmb\tau_{ (2)}   \facemaps^{(0,1)} \pmb\tau_{ (1)}
=  \facemaps_2 \pmb\tau_{ (1)} ,
\end{align*}
thus the final form of $ P_{ \mathrm{II}} $ is given by
\begin{align}
\label{eq:PII-done}
P_{ \mathrm{II} }
&=
\varphi_0 \brac{
\mathscr S_h \brac{
\facemaps_2 (1+ \pmb\tau_{ (1)} ) (T)
}
( \nabla h \otimes  \nabla h) \cdot a
}.
\end{align}

The desired relations of $K$ and  $H$ follow immediately after adding up
$P_{ \mathrm I} $ and $P_{ \mathrm{II}} $
in \eqref{eq:PI-done} and \eqref{eq:PII-done}.

\end{proof}

In \cite[Thoerem 4.10]{MR3194491}, 
the rearrangement operators are defined as the functional calculus
in terms of the modular operator $\mathbf x = [ \cdot , h]$,
cf. \cite[\S1.5]{MR3194491}.
The gap between the two versions is  a simple  process of
changing variables, that is, from multiplication operators
$\set{h^{(0)}, \ldots , h^{(n)}}$ to
modular derivations
$\set{h^{(0)}, \mathbf x^{(1)} , \ldots, \mathbf x^{(n)}}$.
Following the notations in \cite[\S 3.2]{leschdivideddifference}, we set: 
\begin{align}
    \label{eq:x-to-h} 
\mathbf x^{ (j) } = h^{(j)}  - h^{(j-1)}, \, \,  j=1, \ldots n.
\end{align}
In other words,
$\mathbf x^{(j)} = - [ \cdot  , h]^{(j)}: \mathscr A^{\otimes n} \to \mathscr A$
is the commutator derivation acting on the $j$-factor of a elementary tensor.
 Accordingly, one defines the Schwartz functional calculus by the substitution:
\begin{align*}
\mathscr S_{ \mathbf x} ( f ) \defeq
f (\mathbf x^{(1)}, \ldots, \mathbf x^{(n)})  =
f \brac{ h^{(1)} - h^{(0)} , \ldots , h^{(n)} -  h^{(n-1)}}
\in L \brac{ \mathscr A^{\otimes n} , \mathscr A}.
\end{align*}
Now let $ \tilde K$ be the one-variable function such that
$ \tilde K (\mathbf x)
= \tilde K (h^{(1)} - h^{(0)}) = K (h^{(0)} , h^{(1)})$,
then
\begin{align*}
\pmb\delta_0 (K) \brac{  h^{(0)} , h^{(1)} , h^{(2)}}
&=
K (\bullet , h^{(2)}) [h^{(0)} , h^{(1)}] =
\frac{ K \brac{ h^{(0)} , h^{(2)} } - K \brac{ h^{(1)} , h^{(2)} } }{
h^{(0)} - h^{(1)} } \\
&=
\frac{ \tilde K (\mathbf x^{(1)} + \mathbf x^{(2)}) -
\tilde K ( \mathbf x^{(1)})
}{- \mathbf x^{(1)}}.
\end{align*}
Similarly,
\begin{align*}
\pmb\delta_1 (K) \brac{  h^{(0)} , h^{(1)} , h^{(2)}} &=
K ( h^{(0)} ,\bullet  ) [h^{(1)} , h^{(2)}] =
\frac{
\tilde K ( \mathbf x^{(1)})   -
\tilde K (  \mathbf x^{(1)} + \mathbf x^{(2)})
}{ - \mathbf x^{(2)}}
\end{align*}
and
\begin{align*}
\pmb\delta_2 (K) \brac{  h^{(0)} , h^{(1)} , h^{(2)}} &=
\brac{ \pmb\tau_2\pmb\delta_0 } (K)
\brac{  h^{(0)} , h^{(1)} , h^{(2)}} =
K (  \bullet, h^{(1)}  ) [h^{(2)} , h^{(0)}]  \\
&=
\frac{
\tilde K (- \mathbf x^{(2)}) -
\tilde K (\mathbf x^{(1)})
}{
\mathbf x^{(2)} + \mathbf x^{(1)}
} .
\end{align*}
Provided that $ \tilde K$ is an even function, i.e.
$ \tilde K (- \mathbf x^{(2)}) = \tilde K ( \mathbf x^{(2)})$,
we  see that, upto a minus sign,
$( \pmb\delta_0 + \pmb\delta_1 - \pmb\delta_2) (K)$ is exactly the right
hand side of \cite[Eq. (4.42)]{MR3194491}.

\subsection{Functions Arising in Pseudo-differential Calculus}

\label{sec:fn-psdo} 

We set 
$  \omega_\lambda ( x_0 )= \brac{ x_0 - \lambda  }^{-1} \in  \funsp (0)$, 
where $\lambda$ is a complex parameter. 
For each $\alpha = (\alpha_0 , \ldots , \alpha_n) \in \Z_{ \ge 0}^{ n+1 } $,
consider 
\begin{align}
\label{eq:omega-alp}
\omega_\alpha \defeq
\omega_\lambda^{\alpha_0} \otimes \cdots  \otimes  \omega_\lambda^{ \alpha_n}
\in  \funsp (n),
\end{align}
that is
$ \omega_\alpha  \brac{ x_0 , \ldots x_n } =
\omega_\lambda(x_0)^{ \alpha_0 }
\cdots  \omega_\lambda^{ \alpha_n} ( x_n ) $.
Obviously, the cyclic operator $\cyc_{ (n)} $  in \eqref{eq:cyc_n-sp-var}
leads to cyclic permutation on  the index $\alpha$:
\begin{align*}
\omega_{ (\alpha_0 , \ldots , \alpha_n)} \xrightarrow{\cyc_{ (n)} }
\omega_{ (\alpha_n , \alpha_0 , \ldots , \alpha_{ n-1} )} .
\end{align*}

Let us compute the divided difference of $\omega_\lambda$:
\begin{align*}
\omega_\lambda [x_0 , x_1 ] =
\frac{ (x_0 - \lambda)^{-1} -  (x_1 - \lambda)^{-1} }{ x_0 - x_1} =
- \omega_\lambda (x_0) \omega_\lambda (x_1) ,
\end{align*}
which, in terms of the coproduct notation in \eqref{eq:divdif-coprd-notn},
can be written as
$ \ddif ( \omega_\lambda) =  \omega_\lambda \otimes   \omega_\lambda
= - \omega_{ (1,1) } $.
It follows from \eqref{eq:fac_n-sp-var} that  both
$ \facemaps^{(0,1)} \brac{ \omega_{ (1,1)}  }   $ and
$ \facemaps^{(1,1)} \brac{ \omega_{ (1,1)}  }   $
are equal to $ \omega_{ (1,1,1)}$, for instance:
\begin{align*}
\facemaps^{(0,1)} \brac{ \omega_{ (1,1)}  } =
\brac{ \ddif \otimes 1 } \brac{ - \omega_\lambda \otimes  \omega_\lambda }
=
- ( - \omega_\lambda \otimes \omega_\lambda) \otimes \omega_\lambda
= \omega_{ (1,1,1)}  .
\end{align*}
In a similar way, we can obtain all $ \omega_{ (1 , \ldots ,1)} \in  \funsp(n) $
by iterating the face operators on $\omega_\lambda \in  \funsp(0)$:
\begin{equation}
    \label{eq:fac-omg-lam} 
    \begin{tikzcd}
\omega_\lambda \arrow{r}{\facemaps^{(0,0)}}
& - \omega_\lambda \otimes \omega_\lambda
\arrow[bend left=20]{r}{\facemaps^{(0,1)}}
\arrow[bend right=20, ']{r}{\facemaps^{(1,1)}}
& \omega_\lambda \otimes \omega_\lambda \otimes \omega_\lambda
\arrow[bend left=20]{r}{\facemaps^{(0,1)}}
\arrow{r}{\facemaps^{(1,1)}}
\arrow[bend right=20 , ']{r}{\facemaps^{(2,1)}}
& - \omega_\lambda \otimes \omega_\lambda \otimes \omega_\lambda
\otimes \omega_\lambda
\quad \cdots .
\end{tikzcd}
\end{equation}
For the action of the degeneracy operators, \eqref{eq:deg_n-sp-var} yields
\begin{align}
\label{eq:degj-omega-l}
\degenmaps^{(j,n)} \brac{ \omega_\alpha } =
\omega_\lambda^{ \alpha_0 } \otimes \cdots \otimes
\omega_\lambda^{ \alpha_{ j-1}  } \otimes
\omega_\lambda^{ (\alpha_j + \alpha_{ j+1})  } \otimes
\omega_\lambda^{ \alpha_{ j+1}  } \otimes \cdots \otimes
\omega_\lambda^{ \alpha_{ n }  }
= \omega_{ \alpha'} ,
\end{align}
where $\alpha' = (\alpha_0 , \ldots, \alpha_j + \alpha_{ j+1 }, \ldots, n)$.
Now we can reach any $\omega_\alpha$,  $\alpha \in  \Z_{ \ge 0}^{n+1} $,
from $\omega_\lambda$ by iterating the face operators to produce
$\omega_{ (1, \ldots ,1)} $ and then contract the indices
according to \eqref{eq:degj-omega-l},
for instance,
\begin{align*}
\degenmaps^{(0,3)} \brac{ \omega_{ 1,1,1,1}  } =  \omega_{ 2,1,1},
\, \, \, \,
\degenmaps^{(1,3)} \brac{ \omega_{ 1,1,1,1}  } =  \omega_{ 1,2,1} ,
\, \, \, \,
\degenmaps^{(1,2)} \brac{ \omega_{1,2,1}  } =  \omega_{ 1,3} .
\end{align*}

Relations in \cref{thm:cyc-cato-sim-I} can also be used to derived differential
relations.
The simplest case would be  $\partial_{ x_0} \omega_\lambda = - \omega_\lambda^2$,
which can be seen by writing
$ \partial_{ x_0} =  \degenmaps^{(0, 1)} \facemaps^{(0, 0)} $,
while the right hand side can be computed directly:
\begin{align*}
\omega_\lambda
\xrightarrow{ \facemaps^{(0, 0)} }
 - \omega_\lambda \otimes \omega_\lambda
\xrightarrow{  \degenmaps^{(0, 1)}}
-\omega_\lambda^2 .
\end{align*}
The next one: 
$ \partial_{ x_0}^2 \omega_\lambda = 2 \omega_\lambda^3 $.
As before, we first rewrite the second derivative
$\partial_{ x_0}^2 = \degenmaps_0 \facemaps_0 \degenmaps_0 \facemaps_0 $
using \eqref{eq:degj-faci-thm} and \eqref{eq:degj-degi-thm}:
\begin{align*}
\degenmaps^{(0 , 1)} \degenmaps^{(0,2)} \facemaps^{(0 , 1)} \facemaps^{(0 , 0)} 
= &\, \,
\degenmaps^{(0 , 1)} \degenmaps^{(1 , 2)} \facemaps^{(0 , 1)} \facemaps^{(0 , 0)} 
=
\degenmaps^{(0,1)}
\brac{
\facemaps^{(0 , 0)} \degenmaps^{(1 , 1)} - \degenmaps^{(0 , 2)} \facemaps^{(1 , 1)}
}
\facemaps^{(0,0)}  \\
    = &\, \,
\degenmaps^{(0 , 1)} \facemaps^{(0,0)} \degenmaps^{(0 , 1)} \facemaps^{(0 , 0)} -
\degenmaps^{(0 , 1)} \degenmaps^{(0 , 2)} \facemaps^{(1 , 1)} \facemaps^{(0 , 0)} .
\end{align*}
As we have seen in \eqref{eq:fac-omg-lam} and \eqref{eq:degj-omega-l}:
\begin{align*}
\omega_\lambda^3 =
\degenmaps^{(0 , 0)} \degenmaps^{(0 , 2)}
\facemaps^{(0 , 1)} \facemaps^{(0 , 0)}  (\omega_\lambda ) =
\degenmaps^{(0 , 0)} \degenmaps^{(0 , 2)}
\facemaps^{(1 , 1)} \facemaps^{(0 , 0)}  (\omega_\lambda ) .
\end{align*}
Therefore the sum 
$ \brac{ 
\degenmaps^{(0 , 0)} \degenmaps^{(0 , 2)}
\facemaps^{(0 , 1)} \facemaps^{(0 , 0)}
+
\degenmaps^{(0 , 0)} \degenmaps^{(0 , 2)}
\facemaps^{(1 , 1)} \facemaps^{(0 , 0)}   
} (\omega_\lambda )$
indeed recovers the coefficient $2$ in
$ \partial_{ x_0}^2 \omega_\lambda = 2 \omega_\lambda^3 $.
One can push the computation above to derive the following differential relations
\begin{align*}
\omega_{ (\alpha_0 , \ldots , \alpha_n)} =
\brac{ \prod_{ j=0}^n \frac{ (-1)^{\alpha_j-1}}{ (\alpha_j -1)!}
\partial_{ x_j}^{\alpha_j - 1}
}  \omega_{ (1, \ldots ,1)} .
\end{align*}

The family $\set{ \omega_\alpha}$ is intricately connected to the hypergeometric one
$\set{ H_\alpha}$ introduced in the rearrangement lemma in 
\cite{Liu:2018aa}, which a crucial technical tool in the  pseudo-differential
calculus approach of deciphering heat asymptotic. 
A more general version of the lemma  designed for going beyond
conformal geometry has been achieved in \cite{Liu2020General-Rearran}.
The calculation on $\set{ \omega_\alpha}$  hints that one
might be able to derive the recursive and differential relations of
$\set{ H_\alpha}$ in the hypergeometric literature from the action of the
category $ \mathcal C$. 
What has been used is the observation that the cyclic $ \cyc_{ (n)} $
operators acts as cyclic permutation on the index $\alpha$,
which is the new input to obtain 
computer-aid free verification of the Connes-Moscovici type functional relation,
cf. \cite[\S~5]{Liu:2018ab}.
We postpone the fully investigation to future publications and remark that 
the question serve greatly to the interest of finding simplification of the 
$a_4$-term computation \cite{2016arXiv161109815C}.

\appendix

\section{ The Cyclic Category $\Delta C$ }
\label{sec:cyc-DelC}

Connes's cyclic category  $\Delta C$ is a mixture of the simplicial category
$\Delta$ and the cyclic groups.
We collect related notations and preliminary results from
\cite{Loday:1992vm} for the reader's convinience.

\begin{defn}
\label{defn:DelCyc}
The cyclic category $\Delta C$ consists of objects $[n]$,  $n \in \Z_{ \ge 0}$,
and morphisms generated by faces $\delta_j^{(n-1)}: [n-1] \to [n]$, degeneracies
$\sigma_j^{(n+1)}: [n+1] \to [n]$, $j=0, \ldots , n$, and cyclic operators
$\tau_{ (n)}  : [n] \to [n] $, subject to the following relations
\footnote{
The superscripts which indicate the domain have been suppressed accordingly,
such as $\sigma_j^{(n)} \mapsto  \sigma_j$. }:
\begin{itemize}
\item simplicial and co-simplicial relations:
\begin{align}
\label{eq:simp-rels-I}
&    \delta_j \delta_i = \delta_i \delta_{ j-1} ,\, \,
\text{ for $i<j$, }\\
&    \sigma_j \sigma_i = \sigma_i \sigma_{ j +1}  ,\, \,
\text{ for $i \le j$; }
\nonumber
\end{align}
\item their compatibility:
\begin{align}
\label{eq:simp-rels-II}
\sigma_j \delta_i =
\begin{cases}
\delta_i \sigma_{ j-1}  &  i<j \\
\mathbf 1_{ [n]}        & i=j, i=j+1 \\
\delta_{ i-1} \sigma_j  & i> j+1
\end{cases}
\end{align}

\item for cyclic operators: $\tau_{ (n)} ^{n+1} = 1$.

\item compatibility of the cyclic and the simplicial structures:
\begin{align}
\label{eq:simp-cyc-rels}
\tau_{ (n)}  \delta_i = \delta_{ i-1} \tau_{ (n-1) }, \, \,
\tau_{ (n)}  \sigma_i =   \sigma_{ i-1} \tau_{ (n+1) }, \, \,
\end{align}
where $i=1, \ldots, n$.
\end{itemize}
\end{defn}
\begin{rem}
The compatibility relations \cref{eq:simp-cyc-rels} is equivalent to
\begin{align}
\label{eq:simp+cyc-rels-V-2-1}
\delta_n = \tau_{ (n)} ^{n+1} \delta_{ n-1} =
\tau_{ (n)} ^n \delta_{ n-1} \tau_{ n-1} =
\cdots = \tau_{ (n)}  \delta_0 \tau_{ (n-1) } ^n =
\tau_{ (n)}  \delta_0
\end{align}
and
\begin{align}
\label{eq:simp+cyc-rels-V-2-2}
\tau_{ (n)}  \sigma_0 =
\tau_{ (n)}  \sigma_0 \tau_{ (n+1)}^{n+2} =
\tau_{ (n)}^2 \sigma_1 \tau_{ (n+1)}^{n+1} = \cdots =
\tau_{ (n)}^{n+1} \sigma_n \tau_{ (n+1)}^{2} =
\sigma_n \tau_{ (n+1)}^{2}  .
\end{align}
\end{rem}

The simplicial category $\Delta$ is the  subcategory of $\Delta C$ having
the same objects $[n]$,  $n=0,1,2 \ldots $,
while the morphisms are generated  by the faces and degeneracies.
The following realization is related to the discussion in \cref{sec:vcal-reops}.
The objects $[n]$ are ordered set  $\set{0<1 < \cdots < n}$ of $ n+1$ points.
The morphisms consists of non-decreasing functions $f: [n] \to [m]$, meaning
$f(i) \ge f(j)$ whenever $i >  j$.
They can be classified into two categories: injective  and surjective  ones.
The non-decreasing property implies that any morphism
$\psi : [n] \to [m]$ can be represented by the ordered list
$ \set{ \psi^{-1} (\set{0}) , \ldots , \psi^{-1} (\set{m}) }$
the pre-images \footnote{
The notation $\psi^{-1}$ should not be confused with the inverse of $\psi$.
}
of points in $[m]$.
When $\psi$ is injective, the cardinality
$ \# \psi^{-1} \brac{ \set{ l} } $ is either $0$ or  $1$.
Thus an injective morphism is determined by the collection of missing points
\begin{align}
\label{eq:mi-missing-pts}
\mathrm{ms} (\psi) =
\set{ l \in  [m] :  \# \psi^{-1} \brac{ \set{ l} } =0}.
\end{align}
Similarly, surjectivity means that
$ \# \psi^{-1} \brac{ \set{ l} } \ge  1$
and the collection of pre-images $\psi^{-1} \brac{ \set{ l} }$
at the stationary points
\begin{align}
\label{eq:st-stationary-pts}
\mathrm{st}( \psi) =\set{
l \in [m]    :   \# \psi^{-1} \brac{ \set{ l} } > 1
}
\end{align}
is sufficient to capture the morphism $\psi$.
It follows that the identities $\mathbf 1_{ [n]}$ are the only isomorphisms
(i.e. non-decreasing bijections) in $\Delta$.

As for the generators listed in \cref{defn:DelCyc},
the face map $ \delta_j^{(n -1)}: [n-1] \to [n]$
is the injective function missing $  j \in  [n]$
and the degeneracy
$ \sigma_j^{( n +1)} : [n+1] \to [n]$ is the surjective function
hitting  $ j \in  [n] $ twice: $ \sigma_j^{( n +1)} (j) =
\sigma_j^{( n +1)} (j +1)  =j$.

\begin{prop}
\label{prop:decom-phi-monotone}
For any morphism $ \phi: [n] \to [m]$ in the simplicial category $\Delta$,
there is a unique decomposition
\begin{align}
\label{eq:decom-phi-monotone}
\phi = \delta_{ i_1}  \cdots \delta_{ i_r}
\sigma_{ j_1} \cdots \sigma_{ j_s},
\end{align}
such that $i_1 >  i_2 >  \cdots > i_r$ and
$j_1 <  j_2  <  \cdots  < j_s$ with $m = n -s +r$. Convention: if the index
set is empty, then $\phi$ is the identity.
\end{prop}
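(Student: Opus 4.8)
The plan is to prove this by the classical epi--mono factorization, reading the indices directly off the combinatorial invariants $\mathrm{ms}$ and $\mathrm{st}$ introduced in \eqref{eq:mi-missing-pts} and \eqref{eq:st-stationary-pts}. First I would factor $\phi$ as $\phi = \iota \circ \pi$, where $k + 1 = \# \phi([n])$ is the size of the image, $\iota \colon [k] \to [m]$ is the unique injective monotone map with image $\phi([n])$, and $\pi \colon [n] \to [k]$ is the induced corestriction. Since $\phi([n])$ is an order-subset of $[m]$, this factorization exists and is forced: $\iota$ is determined by its image, and $\pi$ by $\phi$ together with $\iota$; moreover $\pi$ is automatically surjective and monotone. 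This is the only place where the image cardinality $k$ enters, and it is intrinsic to $\phi$.

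Next I would decompose each factor into generators. For the injective part, the discussion following \eqref{eq:mi-missing-pts} says $\iota$ is pinned down by $\mathrm{ms}(\iota) = [m] \setminus \phi([n])$; listing this set in decreasing order as $i_1 > \cdots > i_r$ (so $r = m - k$), I claim $\iota = \delta_{i_1} \cdots \delta_{i_r}$. This I would check by induction on $r$: applying the rightmost $\delta_{i_r}$ first opens a gap at $i_r$, and because each later index is strictly larger, the subsequent faces open gaps above the existing ones without disturbing them, so the composite has missing set exactly $\{i_1, \ldots, i_r\}$ and hence equals $\iota$. Dually, using the characterization by $\mathrm{st}$ from \eqref{eq:st-stationary-pts} together with the second relation in \eqref{eq:simp-rels-I}, the surjective part is $\pi = \sigma_{j_1} \cdots \sigma_{j_s}$ with $j_1 < \cdots < j_s$ and $s = n - k$, the indices $j_t$ recording the adjacent arguments that $\pi$ identifies. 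Combining gives the shape of \eqref{eq:decom-phi-monotone} with the stated inequalities, and the dimension bookkeeping reads $m = n - s + r$ since $s = n - k$ and $r = m - k$.

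For uniqueness, suppose $\phi = \delta_{i_1} \cdots \delta_{i_r} \sigma_{j_1} \cdots \sigma_{j_s}$ is any expression of the required form. The degeneracy tail is a surjection $[n] \to [n-s]$ and the face head an injection $[n-s] \to [m]$, so together they realize an epi--mono factorization of $\phi$; by the uniqueness of that factorization (the image determines everything) we get $n - s = k$, and the two words must coincide with $\pi$ and $\iota$. It then suffices to recover the indices: the strictly decreasing face word has missing set exactly $\{i_1, \ldots, i_r\} = \mathrm{ms}(\iota)$ and the strictly increasing degeneracy word has stationary data exactly $\mathrm{st}(\pi)$, so the ordered index lists are read back without ambiguity.

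The main obstacle, and the only step requiring care, is the index bookkeeping in the two building-block claims: that a strictly decreasing word of faces misses precisely the prescribed codomain points, and that a strictly increasing word of degeneracies identifies precisely the prescribed adjacent pairs. This is exactly where the simplicial relations \eqref{eq:simp-rels-I} are needed, since they allow any word in the generators to be normalized into the canonical decreasing-then-increasing form; once the normal form is in hand, both existence and uniqueness reduce to the already-established fact that injective and surjective monotone maps are determined by $\mathrm{ms}$ and $\mathrm{st}$ respectively.
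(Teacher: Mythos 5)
Your proof is correct, but note that the paper does not actually prove this proposition: its ``proof'' is a citation to \cite[Appendix B]{Loday:1992vm}. What you have written out is essentially the classical argument that the citation points to: factor $\phi$ uniquely as a monotone surjection followed by a monotone injection, identify the injective part with the strictly decreasing face word via its missing set \eqref{eq:mi-missing-pts}, identify the surjective part with the strictly increasing degeneracy word via its stationary set \eqref{eq:st-stationary-pts}, and deduce uniqueness because any decomposition of the prescribed shape is itself an epi--mono factorization, whose two halves are then pinned down by these invariants. Your index bookkeeping checks out: with $i_1 > \cdots > i_r$ applied rightmost-first, each new face opens a gap strictly above all previously opened gaps without moving them, so the composite misses exactly $\{i_1,\dots,i_r\}$; dually for the degeneracies. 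One small correction: your closing paragraph claims the simplicial relations \eqref{eq:simp-rels-I} are where the real work lies, because they normalize arbitrary words into decreasing-then-increasing form. For the proposition as stated this is not needed---it asserts existence and uniqueness of decompositions \emph{already in} canonical form, and your paragraphs two and three establish both purely from the combinatorial invariants, without ever invoking the relations. The relations would only enter if you wanted the stronger statement that every word in the generators can be rewritten in normal form (i.e., that \eqref{eq:simp-rels-I} and \eqref{eq:simp-rels-II} give a presentation of $\Delta$), which is a different claim. So your proof stands with that remark deleted; it supplies the self-contained argument the paper outsources.
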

\begin{proof}
See \cite[Appendix B]{Loday:1992vm} and the references therein.
\end{proof}

To complete the corresponding realization of $\Delta C$
(from $\Delta$ described above),
we choose the following cyclic permutations, derived from the cycle
$(01 \ldots n) \in S_{ n+1} $,
as the generators of cyclic groups
\begin{align*}
\tau_{(n)} : [n] \to [n]: 0\mapsto 1 , 1 \mapsto  2 , \ldots , n\mapsto  0.
\end{align*}

The cyclic groups and the simplicial category $\Delta$ are combined in a way that
any morphism in $\Delta C$ can by written uniquely as a composition of a
morphism in $\Delta$ and an element of the cyclic groups. Moreover,
from  \cref{eq:simp+cyc-rels-V-2-1,eq:simp+cyc-rels-V-2-2}, we see that with
the cyclic operators $\tau_{ (n)} $,
one only needs another face and degeneracy maps,
say, $\sigma_0^{(n)}$ and $\delta_0^{(n)}$, to recover the rest.

Another notable feature  of the cyclic category $\Delta C$ (not true for the
simplicial category) is the self-duality $\Delta C \cong \Delta C^{\mathrm{op}}$.
By taking $ \delta_i^* = d_i$,  $\sigma_j^* = s_j$ and  $\tau_n^* = t_n$,
we have the following presentation of $\Delta C^{\mathrm{op}}$.
\begin{defn}
\label{defn:DelCyc-OP}
The opposite category  $\Delta C^{\mathrm{op}}$ has a presentation
given by generators  $t_{ (n)}: [n] \to [n]$ and
\begin{align*}
d_i^{(n)} : [n] \to [n -1], \, \, \, \,
s_i^{(n)}: [n] \to [n + 1], \, \, \, \, i=0, \ldots , n.
\end{align*}
The corresponding relations are
\begin{itemize}
\item simplicial and co-simplicial:
\begin{align}
\label{eq:coCyc-didj}
& d_i d_j = d_{ j-1} d_i , \, \, i < j  \\
& s_i s_j = s_{ j+1} s_i , \, \, i\le j
\label{eq:coCyc-sisj}
\end{align}

\item compatibility:
\begin{align}
d_i s_j =
\label{eq:coCyc-disi}
\begin{cases}
s_{ j -1 } d_i & \text{ for $i < j$} \\
\mathrm{id}_{ (n)} & \text{ for $i=j$,  $i= j+1$} \\
s_{ j  } d_{  i-1}  & \text{ for $i > j + 1$}
\end{cases}
\end{align}
and
\begin{align}
\label{eq:coCyc-ditn}
d_i t_{ (n)} = t_{ (n-1)} d_{ i -1}, \, \,
\text{ for $ 1\le  i \le n$} \, \,
\text{ and} \, \, d_0 t_{ (n)} = d_n  ,
\\
s_i t_{ (n)} = t_{ (n+1)} s_{ i -1}
\text{ for $ 1\le  i \le n$} \, \,
\text{ and} \, \, s_0 t_{ (n)} = t_{ (n + 1)}^2 s_n    .
\label{eq:coCyc-sitn}
\end{align}

\item cyclic: $ t_{ (n)}^{n+1}  = \mathrm{id}_{ (n)} $.
\end{itemize}

\end{defn}

We will also make use of the  duality functor from
$ \Delta C^{\mathrm{op}}$ to $\Delta C$ described in
\cite[\S6.1.11]{Loday:1992vm}.
The construction starts with adding the following extra degeneracies
to the generators of $\Delta C$:
\begin{align}
\label{eq:extr-deg-cyc}
\sigma^{(n+1)}_{ n+1}  \defeq \sigma_0 t_{ n+1 } ^{-1} : [ n+1] \to [n] .
\end{align}
Notice that the relation $ t_n \sigma_i = \sigma_{ i-1} t_{ n+1}  $
extends to $i=n+1$.

The duality functor sends $[n] \to [n]$, on morphisms,
it sends $ \tau_n \to (t_n)^{-1} : [n ] \to [n] $ and
\begin{align}
\label{eq:dul-futr-d-sig}
d_i^{(n+1)} \mapsto  \sigma_i^{(n+1)} : [n+1] \to [n] , \, \,
\text{ for $i= 0 , \ldots , n+1$,}
\end{align}
and
\begin{align}
\label{eq:dul-futr-s-del}
s_i^{(n-1)} \to \delta_{ i+1}^{(n)} : [n-1] \to [n] , \, \,
\text{ for $i= 0 , \ldots , n$.}
\end{align}
Note that $\delta_0^{(n)}$
is missed in \cref{eq:dul-futr-s-del}.

\bibliographystyle{hplain}

\bibliography{mylib1.bib}
\end{document}